\numberwithin{equation}{section}
\theoremstyle{plain}
\newtheorem{theorem}{Theorem}[section]
\newtheorem{corollary}[theorem]{Corollary}
\newtheorem{lemma}[theorem]{Lemma}
\newtheorem{proposition}[theorem]{Proposition}
\theoremstyle{definition}
\newtheorem{definition}[theorem]{Definition}
\newtheorem{remark}[theorem]{Remark}
\theoremstyle{remark}
\newcommand{\OO}{\mathcal O}
\newcommand{\A}{\mathbb{A}}
\newcommand{\R}{\mathbb{R}}
\newcommand{\Q}{\mathbb{Q}}
\newcommand{\Z}{\mathbb{Z}}
\newcommand{\C}{\mathbb{C}}
\renewcommand{\H}{\mathbb{H}}
\newcommand{\zxz}[4]{\begin{pmatrix} #1 & #2 \\ #3 & #4 \end{pmatrix}}
\newcommand{\abcd}{\zxz{a}{b}{c}{d}}
\newcommand{\kzxz}[4]{\left(\begin{smallmatrix} #1 & #2 \\ #3 & #4\end{smallmatrix}\right) }
\newcommand{\vol}{\operatorname{vol}}
\newcommand{\tr}{\operatorname{tr}}
\newcommand{\Spin}{\operatorname{Spin}}
\newcommand{\Hom}{\operatorname{Hom}}
\newcommand{\Aut}{\operatorname{Aut}}
\newcommand{\GL}{\operatorname{GL}}
\newcommand{\SO}{\operatorname{SO}}
\newcommand{\cha}{\operatorname{char}}
\newcommand{\kay}{k}
\newcommand{\Gspin}{\operatorname{GSpin}}
\newcommand{\ff}{\hbox{if }}
\newcommand{\SL}{\operatorname{SL}}
\newcommand{\gen}{\operatorname{gen}}
\begin{document}

\title[Ternary quadratic forms and representation number]{Ternary quadratic forms and Heegner divisors}

\author[]{}
\dedicatory{}

\address{}
\email{}

\thanks{}

%\subjclass[2000]{14G35, 11F55, 11G18}

\date{\today}
\author[Tuoping Du]{Tuoping Du}
\address{Department of Mathematics, Northwest University, Xi'an, 710127 ,  P.R. China}
\email{dtpnju@gmail.com}
\thanks{dtpnju@gmail.com}
\maketitle

\begin{abstract}
In this paper,  I use Siegel-Weil formula and Kudla matching principle to prove some interesting identities between representation number (of ternary quadratic space ) and the degree of  Heegner divisors.

\end{abstract}

\maketitle

%\tableofcontents

\section{Introduction}
\label{sect: introduction}

Kudla found in \cite{KuIntegral} that Fourier coefficients of some Eisenstein series have geometric meaning.
He also obtained a useful matching principle which is an identity between two genus theta series from two different quadratic spaces as both are special values of the same Eisenstein series. This simple identity connects two different quadratic spaces, also give arithmetic and geometric interpretation of the coefficients. In this paper, I use this principle to prove some new identities on ternary quadratic spaces, and relate the representation numbers for lattices in definite space and degree of Heegner divisors in Shimura curves.

\begin{comment}
Deep geometric connection between modular curves and Shimura curves were explored by Ribet in his seminal work on level lowering of modular forms \cite{Ri90}, \cite{Ri89}.
\end{comment}

Let $D$ be a square  free positive integer, and let $B=B(D)$ be the unique
 quaternion algebra of discriminant $D$ over $\Q$, i.e., $B$ is
 ramified at a finite prime $p$ if and only if $p|D$.  The reduced
 norm, denoted by $\det$ in this paper.   Let $$V(D)=\{x\in B(D)\mid \tr(x)=0 \}$$ with the restriction  quadratic form $\det$, where $\tr$ is the reduced trace.

 For a positive integer $N$ prime to
 $D$, let $L_D(N)=\OO_D(N) \bigcap V(D) $, where $\OO_D(N)$  is an Eichler order in $B$ of conductor $N$. We can view $L=(L_D(N), \det)$ as
 an even integral lattice in $V(D)$. The quaternion $B$ is definite if and only if $D$ has odd
 number of prime factors.

When $V(D)$ is positive definite($D$ has odd number of prime factors),
there is a very interesting but hard  question to compute the
 representation number (for a positive integer $m$)
\begin{equation}
r_L(m)=|\{ x \in L_D(N):\, \det x =
m\}|.
\end{equation}
In general, it is very hard to compute this number. However, there is a available method to compute the average over the genus $\gen(L)$, which we denote by
\begin{equation}\label{genusr}
r_{D, N}(m) =r_{\gen(L)}(m) =\bigg(\sum_{L_1 \in \gen(L)}
\frac{1}{|\Aut(L_1)|}\bigg)^{-1} \sum_{L_1 \in \gen(L)}
\frac{r_{L_1}(m)}{|\Aut(L_1)|}.
\end{equation}
It is a  product of local densities, Siegel studied
in 1930's \cite{siegel formula}. These densities  are computable (see
\cite{YaDensity}, \cite{Yatwo}).

When  $V(D)$ is indefinite, the representation number does not make sense anymore since a number can be represented by infinitely many points. The number is related to the degree of Heegner divisors in Shimura curves for this case.

Fix an embedding  $ i: B(D)\hookrightarrow M_2(\R)$ such that $B(D)^\times $ is invariant under the automorphism  $x \mapsto x^l= {}^tx^{-1}$ of $\GL_2(\R)$. Let   $\Gamma_0^D(N) =\OO_D(N)^1$ be the  group of (reduced) norm $1$ elements in $\OO_D(N)$ and   let  $X_0^D(N) = \Gamma_0^D(N) \backslash \H$ be the associated Shimura curve. For a positive integer $m$, let $Z_{D, N}(m)$ be Heegner divisors in $X_0^D(N)$  associated to the lattice $L_{D}(N)$ which we will introduce in Section \ref{sect:definite and indefinite}. The degree \begin{equation}\deg Z_{D, N}(m)= \sum_{x\in \Gamma \setminus L_{m}} \frac{1}{\mid \Gamma_{x} \mid},\end{equation} see  \cite[3.27]{Fu} .

 Let $\Omega =\frac{1}{2\pi} y^{-2} dx \wedge dy$ be the normalized differential form on $X_0^D(N)$, and let
 \begin{equation}
 \vol(X_0^D(N), \Omega) = \int_{X_0^D(N)} \Omega
 \end{equation}
 be the  volume of $X_0^D(N)$ with respect to $\Omega$, which is a  positive rational number (see (\ref{eq:volume})).

Finally,  we define the normalized degree (when $D$ has even number of prime factors)
\begin{equation}\label{hdeg}
r_{D, N}(m)= \frac{\deg Z_{D,  N}(m)}{ \vol(X_0^D(N), \Omega)},
\end{equation}
the same notation as definite case. In this paper, when $D$ has odd number of prime factors, $r_{D, N}(m)$ denote average representation number (\ref{genusr}), when $D$ has even number of prime factors $r_{D, N}(m)$ denote normalized degree (\ref{hdeg}).

From Kudla's matching principle, we know Fourier coefficients of some theta series are associated to $r_{D, N}$.
In the paper \cite{DuYang}, we proved some interesting identities between average representation number for quaternion algebras. We also proved the result on the degrees of Hecke correspondences on Shimura curves. The main idea of this paper is the same as  \cite{DuYang}, bur more complicated because of the odd dimensional Weil representation(see Section 2). I will prove analogues result Theorem \ref{theo1.1} on the three dimension quadratic space. For the indefinite space $V(D)$, we give an interesting identity Corollary \ref{coro1.2} for Heegner divisors.

\begin{theorem} \label{theo1.1} Let $D$ be a square-free positive
integer, let $p \ne q$ be two
different primes not dividing $D$, and let $N$ be a positive integer
prime to $Dpq$. Then
$$
-\frac{2}{q-1} r_{Dp, N}(m) + \frac{q+1}{q-1} r_{Dp, Nq}(m)
 =-\frac{2}{p-1} r_{Dq, N}(m) + \frac{p+1}{p-1} r_{Dq, Np}(m)
$$
for every positive integer $m$.
\end{theorem}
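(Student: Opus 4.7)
The plan is to realize both sides of the identity as the $m$-th Fourier coefficient of one and the same Eisenstein series on the metaplectic double cover $\widetilde{\SL}_2(\A)$, using Kudla's matching principle together with the Siegel--Weil formula for the ternary quadratic spaces $V(Dp)$ and $V(Dq)$. On each side I would attach to a lattice $L$ its local characteristic function $\varphi_{L,f} = \bigotimes_{\ell<\infty} \mathbf 1_{L_\ell}$ and pair it with a fixed archimedean Gaussian $\varphi_\infty$ to obtain a Schwartz--Bruhat section $\varphi_L$ of the Weil representation $\omega_\psi$. Then $r_{D', N'}(m)$ is, up to the explicit volume/automorphy normalization recalled in the introduction, the $m$-th Fourier coefficient of $E(g,s_0;\lambda(\varphi_L))$, where $\lambda\colon \mathcal S(V(\A))\to I(s_0,\chi_V)$ is the map $\varphi\mapsto (g\mapsto(\omega(g)\varphi)(0))$ into the degenerate principal series. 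In the definite case this is Siegel's formula for the averaged representation number, and in the indefinite case it is the Kudla--Milson/Kudla formula for the degree of the Heegner divisor divided by the volume of $X_0^D(N)$ with respect to $\Omega$.

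Next I would localize the problem. The four lattices $L_{Dp}(N)$, $L_{Dp}(Nq)$, $L_{Dq}(N)$, $L_{Dq}(Np)$ coincide at every prime $\ell\notin\{p,q\}$ and agree at $\infty$, so the corresponding local Schwartz functions $\varphi_{L,\ell}$ are equal outside $\{p,q\}$. Consequently, both sides of the claimed identity are the $m$-th Fourier coefficient of a global Eisenstein series whose section differs only in the two local components at $p$ and $q$; by the uniqueness of Fourier expansion, it suffices to show the global Eisenstein sections themselves coincide, and this reduces to an identity of local sections in $I_p(s_0,\chi)\otimes I_q(s_0,\chi)$.

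The central step is therefore the following local Kudla matching at $p$ and $q$: I must verify that
\[
\lambda\Bigl(-\tfrac{2}{q-1}\,\varphi_{L_{Dp}(N),q}+\tfrac{q+1}{q-1}\,\varphi_{L_{Dp}(Nq),q}\Bigr)
=
\lambda\bigl(\varphi_{L_{Dq}(N),q}\bigr),
\]
together with the mirror identity at $p$ with the roles of $p,q$ reversed. Concretely, at the prime $q$ the space $V(Dp)_q$ is split (since $q\nmid Dp$), and the two Eichler-level lattices $L_{Dp}(N)_q$ and $L_{Dp}(Nq)_q$ differ by the presence of a $q$-parahoric condition; at the same prime $q$ the space $V(Dq)_q$ is the anisotropic three-dimensional quadratic space, and $L_{Dq}(N)_q$ is its unique maximal lattice. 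The coefficients $-\tfrac{2}{q-1}$ and $\tfrac{q+1}{q-1}$ will drop out from the ratio of local Haar measures, the indices of Eichler orders in maximal orders, and the eigenvalues of $\lambda$ on the span of the three obvious characteristic functions. This is the odd-dimensional analogue of the matching lemma in \cite{DuYang}.

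The main obstacle is this local verification in the ternary, metaplectic setting. Unlike the four-dimensional quaternion case of \cite{DuYang}, the Weil representation is genuine (half-integral weight), so one has to be careful with the Weil index, the choice of additive character, and the normalization of $\lambda$ at each of $p$ and $q$; in particular one must check that the constants $-\frac{2}{q-1}, \frac{q+1}{q-1}$ produced at $q$ from the split side exactly cancel the analogous constants at $p$ produced from the anisotropic side, so that the swap $p\leftrightarrow q$ leaves the global section invariant. Once this symmetric local identity is established, applying Siegel--Weil on each of the two genera and reading off the $m$-th Fourier coefficient yields the identity of Theorem~\ref{theo1.1} uniformly for all positive integers $m$.
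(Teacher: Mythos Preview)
Your proposal is correct and follows essentially the same route as the paper: the paper constructs global Schwartz functions $\varphi^{(1)}\in S(V(Dp)(\A))$ and $\varphi^{(2)}\in S(V(Dq)(\A))$ that agree away from $p,q,\infty$, verifies the local matching $\lambda(\varphi^{ra})=\lambda\bigl(-\tfrac{2}{\ell-1}\varphi^{sp}+\tfrac{\ell+1}{\ell-1}\varphi_1^{sp}\bigr)$ at $\ell\in\{p,q\}$ (Proposition~\ref{localmatch}) and at $\infty$ (Proposition~\ref{prop3.3}), and then equates the theta integrals via Siegel--Weil and reads off the $m$-th Fourier coefficient using Proposition~\ref{pro 5.1}. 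One small clarification: the constants $-\tfrac{2}{\ell-1},\ \tfrac{\ell+1}{\ell-1}$ are obtained not from Haar-measure ratios but by evaluating $\lambda(\varphi)$ at $1$ and $w'$ in the two-dimensional $\widetilde{K_0(\ell)}$-fixed subspace of $I_\ell(\tfrac12,\chi_\ell)$, where the Weil indices $\gamma(V^{ra})=-\gamma(V^{sp})$ supply the sign that makes the linear system produce exactly these coefficients.
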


\begin{comment}
For
example,  Gross and Keating (\cite{GK}, \cite[Page 27]{Wed}) proved
$$
r_{p, 1}(m) =2 \left( \sum_E \frac{1}{|\Aut(E)|}\right)^{-2}
\sum_{(E, E')} \frac{r_{\Hom(E, E')}(m)}{|\Aut(E)| |\Aut(E')|}.
$$
Here the sums are over supersingular elliptic curves over
$\bar{\mathbb F}_p$, and $\Hom(E, E')$ is the quadratic lattice  of
isogenies  (from $E$ to $E'$) with degree as the quadratic form.
Replacing $E$ by a pair $( E,  C)$ where $C$ is a cyclic subgroup of $E$ of order $N$, and $\Hom(E, E')$ by $\Hom( (E, C) , (E', C') )$,
one gets $r_{p, N}(m)$ (see Section \ref{sect:definite} for detail). So Theorem \ref{theo1.1}  gives some relations between supersingular elliptic curves over different primes when we take $D=1$.   We also remark that $|\Aut(E)|$ has a simple formula (see  \cite{Gr2})
\end{comment}
\begin{comment}
Notice that when $p>??$, one has
$|\Aut(E)|=2$. So  we have the following corollary.

\begin{corollary}  \label{cor1.2} Let $p$ and $q$ be two different primes bigger than ??. Then
\begin{align*}
&-\frac{2}{q-1} \sum_{(E_p, E_p')} r_{\Hom(E_p, E_p')}(m) +
\frac{q+1}{q-1} \sum_{ (E_p, C_q), (E_p', C_q')} r_{\Hom( (E_p, C_q),
(E_p', C_q'))}(m)
\\
&=-\frac{2}{p-1} \sum_{(E_q, E_q')} r_{\Hom(E_q, E_q')}(m) +
  \frac{p+1}{p-1} \sum_{ (E_q, C_p), (E_q', C_p')} r_{\Hom( (E_q, C_p),
(E_q', C_p'))}(m).
\end{align*}
Here $E_p$ means supersingular elliptic curves over $\bar{\mathbb
F}_p$ and $C_p$ means  cyclic subgroup of order $p$.
\end{corollary}
\end{comment}
In the work \cite{BerkovichJagy}, Alexander Berkovich and William C. Jagy obtained a similar result
\begin{equation}s(p^{2}n)-ps(n)=48\sum_{\tilde{f}\in TG_{1,p}}\frac{R_{\tilde{f}}(n)}{|\Aut(\tilde{f})|}-96\sum_{\tilde{f}\in TG_{2,p}}\frac{R_{\tilde{f}}(n)}{|\Aut(\tilde{f})|},\end{equation}
where $s(n)$ is the representation number of $x^{2}+y^{2}+z^{2}=n$. Indeed, two terms of the right side are average representation number on genus. Their work is concrete and they compute local density directly. My result is general, and I don't compute the local density but use Kudla matching.

 Katsurada and Schulze-Pillot studied the action of Hecke operator on genus theta functions (\cite{KSP}) and obtained some very interesting formulas between different genus theta functions. Essentially, these identities are also Kudla's matching principle's application as they pointed in \cite[Section 6]{KSP}.  In our paper (\cite{DuYang}) and this paper, $r_{D, N}(m)$'s are Fourier coefficients of some genus theta functions of weight $2$ and $3/2$ respectively, which are different from theirs.
When the space $V(D)$ is indefinite, the Theorem \ref{theo1.1} is geometric. For the Heegner divisors,
\begin{corollary}\label{coro1.2}
Let the notations be as above, then $$-2\deg Z_{Dp, N}(m)+\deg Z_{Dp, Nq}(m)=-2\deg Z_{Dq, N}(m)+\deg Z_{Dq, Np}(m).$$
\end{corollary}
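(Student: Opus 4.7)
The plan is to deduce Corollary~\ref{coro1.2} directly from Theorem~\ref{theo1.1} by unwinding the definition of $r_{D,N}(m)$ in the indefinite setting and invoking the explicit volume formula for $X_0^D(N)$. Since Heegner divisors appear, we are in the case where both $Dp$ and $Dq$ have an even number of prime factors (so $D$ has an odd number), and hence for each of the four terms in Theorem~\ref{theo1.1} the quantity $r_{\bullet,\bullet}(m)$ equals $\deg Z_{\bullet,\bullet}(m)/\vol(X_0^{\bullet}(\bullet),\Omega)$ by definition~\eqref{hdeg}.

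The key input is the Eichler--Shimizu volume formula referenced in \eqref{eq:volume}, namely
$$
\vol(X_0^{D'}(N'),\Omega) \;=\; \frac{1}{6}\prod_{\ell\mid D'}(\ell-1)\prod_{\ell\mid N'}(\ell+1),
$$
for any indefinite discriminant $D'$ and level $N'$ coprime to $D'$. Setting
$$
C \;=\; \frac{1}{6}\prod_{\ell\mid D}(\ell-1)\prod_{\ell\mid N}(\ell+1),
$$
this gives $\vol(X_0^{Dp}(N),\Omega)=(p-1)C$ and $\vol(X_0^{Dp}(Nq),\Omega)=(p-1)(q+1)C$, together with the analogous identities obtained by swapping $p$ and $q$. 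I would simply substitute these four values into Theorem~\ref{theo1.1}.

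After this substitution, the left-hand side of Theorem~\ref{theo1.1} collapses to
$$
\frac{1}{(p-1)(q-1)C}\Bigl(-2\deg Z_{Dp,N}(m)+\deg Z_{Dp,Nq}(m)\Bigr),
$$
because the factor $q+1$ from the second term cancels the $q+1$ in $\vol(X_0^{Dp}(Nq),\Omega)$ and the overall $(q-1)^{-1}$ is extracted. Exactly the same computation applied to the right-hand side produces
$$
\frac{1}{(p-1)(q-1)C}\Bigl(-2\deg Z_{Dq,N}(m)+\deg Z_{Dq,Np}(m)\Bigr).
$$
Multiplying both sides by the common nonzero factor $(p-1)(q-1)C$ yields the claimed identity.

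In this proof all of the substance lies in Theorem~\ref{theo1.1}; the corollary is essentially an algebraic rearrangement. The only nontrivial ingredient beyond the theorem is the volume formula, which is recorded in \eqref{eq:volume}; the mild point worth checking carefully is that the normalizing constant $C$ is genuinely the same in the $Dp$ and $Dq$ cases, which is why the hypothesis that $p,q\nmid D$ and $(N,Dpq)=1$ is exactly what is needed to make the products over primes dividing $D$ and $N$ match on both sides.
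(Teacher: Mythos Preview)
Your proof is correct and matches the paper's approach exactly: the paper likewise deduces the corollary from Theorem~\ref{theo1.1} by inserting the definition~\eqref{hdeg} and the volume formula~\eqref{eq:volume}, and then clearing the common scalar. One tiny slip worth noting: your simplified expression $\vol(X_0^{D'}(N'),\Omega)=\tfrac{1}{6}\prod_{\ell\mid D'}(\ell-1)\prod_{\ell\mid N'}(\ell+1)$ is only literally correct when $N'$ is square-free, whereas the paper allows general $N$; however, the argument only uses the ratios $\vol(X_0^{Dp}(Nq))/\vol(X_0^{Dp}(N))=q+1$ and $\vol(X_0^{Dp}(N))/\vol(X_0^{Dq}(N))=(p-1)/(q-1)$, and these follow from~\eqref{eq:volume} without any square-freeness assumption on $N$, so your computation of the common factor $\tfrac{1}{(p-1)(q-1)C}$ is valid with $C$ taken to be $\vol(X_0^{Dp}(N),\Omega)/(p-1)$.
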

In the work \cite{Fu}, J.Funke Showed more details of Heegner divisors and weight $3/2$ modular forms.
The above results are relations between the same kind number $r_{D, N}$. The next theorem is a relation between different kinds numbers,
\begin{theorem} \label{theo1.3}  Let $D >1$ be a square-free positive
integer, let $p \nmid D$ be a prime, and   let $N$ be a positive integer
prime to $Dp$. Then
$$
r_{Dp, N}(m)= -\frac{2}{p-1} r_{D, N}(m) + \frac{p+1}{p-1} r_{D, Np}(m).
$$
\end{theorem}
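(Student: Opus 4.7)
The plan is to realize both sides as Fourier coefficients of the same adelic Eisenstein series via the Siegel--Weil formula and Kudla's matching principle, exactly as in Theorem \ref{theo1.1}. The ternary spaces $V(D)$ and $V(Dp)$ agree locally at every place $\ell \ne p, \infty$ (since $D$ and $Dp$ differ only at $p$) while having opposite Hasse invariant at $p$ and opposite signature at $\infty$; the three lattices $L_D(N)$, $L_D(Np)$, $L_{Dp}(N)$ all have the same $\ell$-adic components for every prime $\ell \ne p$, so the global set-up depends only on the behaviour at $p$.

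First I would write $r_{Dp, N}(m)$, $r_{D, N}(m)$ and $r_{D, Np}(m)$ uniformly as $m$-th Fourier coefficients of genus theta series of weight $3/2$, using the normalization (\ref{hdeg}) to treat the definite and indefinite cases on the same footing. By Siegel--Weil each such genus theta equals the value at $s=1/2$ of a Siegel Eisenstein series $E(\tau, s; \Phi)$ on $\Mp_2(\A)$ attached to a factorizable adelic Schwartz function $\Phi = \otimes_v \Phi_v$. At every finite $v \ne p$ I would take $\Phi_v$ to be the characteristic function of the common local component of the three lattices; at infinity I would take the standard Gaussians, which on the opposite-signature spaces $V(D)_\infty$ and $V(Dp)_\infty$ produce the same weight-$3/2$ section of the degenerate principal series of $\Mp_2(\R)$, the archimedean matching already exploited in \cite{DuYang}.

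The main obstacle is the purely local matching at $p$: I need to show that, as sections of the degenerate principal series of $\Mp_2(\Q_p)$ (equivalently, via the local $p$-adic Whittaker integrals in the $3$-dimensional Weil representation), the characteristic function $\varphi_{Dp, N}$ of $L_{Dp}(N)_p \subset V(Dp)_p$ corresponds to
\[
-\tfrac{2}{p-1}\,\varphi_{D, N} \;+\; \tfrac{p+1}{p-1}\,\varphi_{D, Np},
\]
where $\varphi_{D, N}$ and $\varphi_{D, Np}$ are the characteristic functions of the $p$-parts of $L_D(N)$ and $L_D(Np)$. Unwinding, $L_{Dp}(N)_p$ is the trace-zero part of the maximal order of the ramified quaternion $\Q_p$-algebra, while $L_{D}(N)_p$ and $L_{D}(Np)_p$ are the trace-zero parts of Eichler orders of levels $1$ and $p$ in $M_2(\Q_p)$. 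The match is verified by explicitly computing the local Whittaker functions attached to the three Schwartz functions and checking the linear relation at $s=1/2$; the coefficients $-\tfrac{2}{p-1}$ and $\tfrac{p+1}{p-1}$ are forced by the ratios of local volumes of these three orders together with the fact that the resulting spherical sections span only a $2$-dimensional subspace of the degenerate principal series, in which exactly one linear dependence can hold. Once this matching at $p$ is established, multiplying over all places produces a single global Eisenstein series whose $m$-th Fourier coefficient simultaneously equals $r_{Dp, N}(m)$ and $-\tfrac{2}{p-1}r_{D, N}(m) + \tfrac{p+1}{p-1}r_{D, Np}(m)$, completing the proof.
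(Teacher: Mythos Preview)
Your proposal is essentially the paper's own proof: build a global matching pair between Schwartz functions on $V(D)$ and $V(Dp)$, apply Siegel--Weil (Proposition~\ref{globalmatch}) to equate the two theta integrals, and extract Fourier coefficients via Proposition~\ref{pro 5.1}. Two small corrections. First, at $\infty$ the function on the indefinite space is \emph{not} a Gaussian: the matching partner of $\varphi_\infty^{ra}=e^{-2\pi\det x}$ is Kudla's Schwartz function $\varphi_\infty^{sp}(x,V^-)=(4\pi(x^+,x^+)-1)e^{-\pi(x^+,x^+)+\pi(x^-,x^-)}$ of Proposition~\ref{prop3.3}, and it is only this modified function that maps to the weight-$3/2$ section. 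Second, for the local matching at $p$ the paper does not compute Whittaker integrals but rather observes (Proposition~\ref{localmatch}) that $\lambda^{ra}(\varphi^{ra})$, $\lambda^{sp}(\varphi^{sp})$, $\lambda^{sp}(\varphi_1^{sp})$ all lie in the two-dimensional space $I(\tfrac12,\chi_p)^{\widetilde{K_0(p)}}$ and evaluates each at $1$ and at $w'$ to obtain the coefficients $-\tfrac{2}{p-1}$, $\tfrac{p+1}{p-1}$ directly; your Whittaker computation would work too, but is more labor than needed.
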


\begin{remark}
Notice that, when $D=1$, there is no Siegel-Weil formula for space $V(1)$. For this space, there is a regularized Siegel-Weil formula. The coefficients of regularized theta integral are not related to our problem, so we assume $D >1$ in the Theorem $\ref{theo1.3}$. Indeed, Theorem \ref{theo1.1} and \ref{theo1.3} are different for $D$ which has even or odd number prime factors.
\end{remark}

From this theorem, we know representation number $r_{D, N}(m)$ are closely related to degree of Heegner divisors. The representation number has a geometric interpretation for the degree of Heegner divisors, and they could be computed by each other. From the arithmetic geometry point of view, this is natural.

 This paper is organized as below. In Section \ref{sect:Kudla}, recall the  Weil representation and Kudla's matching principle in general case. In Section  \ref{sect:matching}, I prove  some explicit local matchings  between Schwartz functions on two space $V^{ra}$, $V^{sp}$  and also construct global matching pairs. In Section \ref{sect:definite and indefinite}, I discuss the definite case and study coefficients of theta integral. I will show a fact that $r_{D, N}(m)$ is the $m$-Fourier coefficient of some theta integral. Then introduce the Shimura curve and Heegner divisors in the later part. In Section \ref{result}, we found Fourier coefficients of  the theta integral are connected  to the degree of Heegner divisors for indefinite space. At the end I prove Theorems \ref{theo1.1}, \ref{theo1.3} and Corollary \ref{coro1.2}.

\section{Preliminaries} \label{sect:Kudla}

Let $(V, Q)$ be a nondegenerate quadratic space over $\mathbb{Q}$, and let
$ G=\SL_{2}$. Fix the unramified canonical additive character $$\psi:\quad \mathbb{A}/\mathbb{Q}\rightarrow \C^{\times}, \quad \psi_{\infty}(x) = e^{2\pi ix}.$$ There is a nontrivial twofold extension
$$ 1 \longrightarrow  \mu_{2} \longrightarrow \widetilde{G} \longrightarrow G \longrightarrow 1.$$
 We identify
$\widetilde{G}(\mathbb{A})=\SL_{2}(\mathbb{A})\times \{{\pm 1}\}$,
where multiplication on the right is given by
$$[g_{1}, \epsilon_{1}][g_{2}, \epsilon_{2}]=[g_{1}g_{2},\epsilon_{1}\epsilon_{2}c(g_{1}, g_{2})],$$
for the cocycle as in
\cite{KuIntegral}. For subgroup $P$ of $G$, we denote $\widetilde{P}$ the full inverse image in $\widetilde{G}$.

In particular, we have subgroups

\vspace{0.2cm}

$ N^{\prime}(\mathbb{A})=\{n=[n(b), 1] | b \in \mathbb{A}\}$,
\begin{displaymath}
n(b)=
\left(
  \begin{array}{cc}
    1 & b \\
     & 1 \\
  \end{array}
\right)
\end{displaymath}
and

$\widetilde{M}(\mathbb{A})= \{m=[m(a), \varepsilon] | a\in \mathbb{A}^{\times}, \varepsilon=\pm1\}$,
\begin{displaymath}
m(a)=
\left(
  \begin{array}{cc}
    a &  \\
     & a^{-1} \\
  \end{array}
\right)
\end{displaymath}

Let $\chi=\chi_{V}$ be the quadratic character of ${\mathbb{A}}^{\times} / \mathbb{Q}^{\times}$ associated with $V$ defined by

\begin{center}
$\chi(x)=(x, (-1)^{m(m-1)/2}\det(V))$,
\end{center}
where $m=\dim(V)$ and $\det(V) \in \mathbb{Q}^{\times}/\mathbb{Q}^{\times,2}$ is the determinant of the matrix for the quadratic form $Q$ on $V$.
$\chi$ determines a character $\chi^{\psi}$ on $\widetilde{M}(\A)$ by $$ \chi^{\psi}([m(a), \varepsilon])=\varepsilon \chi(a)\gamma(a, \psi),$$ where $\gamma(a, \psi)$ is the Weil index.

The group $\widetilde{G}(\mathbb{A})$ acts on the Schwartz space $S(V(\mathbb{A}))$ via the Weil representation $\omega=\omega_{\psi}$ determined by our fixed additive character ${\psi}$ of $\mathbb{A}/\mathbb{Q}$, and this action  commutes with the linear action of $O(V)(\mathbb{A})$.
The $\widetilde{\SL_2}(\A)$-action is determined by (see for example \cite{KuSplit})
\begin{eqnarray}\label{weilrep}
&\omega((n(b),1))\varphi(x)=\psi(bQ(x))\varphi(x), \nonumber\\
&\omega((m(a),\varepsilon)\varphi(x)=\chi^{\psi}(a, \varepsilon) \mid a\mid^{\frac{m}2} \varphi(ax), \\
&\omega((w, 1))\varphi= \gamma(V)^{-1}  \widehat{\varphi} = \gamma(V)^{-1}\int_{V(\A)}\varphi(y)\psi((x, y))dy ,\nonumber
\end{eqnarray}
where for $a \in \A^\times$, $b \in \A$
\begin{center}
$n(b)=
\left(
  \begin{array}{cc}
    1 & b \\
     & 1 \\
  \end{array}
\right),
m(a)=
\left(
  \begin{array}{cc}
    a &  \\
     & a^{-1} \\
  \end{array}
\right)
,
w=\left(
  \begin{array}{cc}
     & 1 \\
     -1&  \\
  \end{array}
\right),$
\end{center}
 $dy$ is the Haar measure on $V(\A)$ self-dual with respect to $\psi ((x, y))$, and $\gamma(V)=\prod_{p} \gamma(V_p)=1$, where $\gamma(V_p)$ is a $8$-th root of unity  associated to  the local Weil representation at place $p$ (local Weil index).  Let  $P=NM$  be the standard Borel subgroup of $\SL_2$, where $N$ and $M$ are subgroups of $n(b)$ and $m(a)$ respectively.

For $s \in \mathbb{C}$, let $I(s, \chi)$ be the principal series representation of $\widetilde{G}_{\mathbb{A}}$ consisting of smooth functions $\Phi(s)$ on $\widetilde{G}(\mathbb{A})$ such that

\begin{equation} \label{eq:1.1}
\Phi(nm(a)g^{\prime}, s) =
\left\{
\begin{array}{cc}
\chi^{\psi}(m(a))|a|^{s+1}\Phi(g^{\prime}, s) &\mbox {if m is odd,}\\
\chi(m(a))|a|^{s+1}\Phi(g^{\prime}, s) &\mbox {if m is even.}
\end{array}\right.
\end{equation}

There is a $\widetilde{G}(\mathbb{A})$ intertwining map
\begin{equation}
\lambda=\lambda_{V} : S(V(\mathbb{A})) \rightarrow I(s_{0}, \chi),
\end{equation}

\begin{center}
$\lambda(\varphi)(g^{\prime})= \omega(g^{\prime})(0)$,
\end{center}
where $\omega$ is the Weil representation of the group $\widetilde{G}(\mathbb{A})$.
Let $\widetilde{K}_{\infty}\widetilde{K}$ be the full inverse image of
$SO(2)\times \SL_{2}(\hat{\mathbb{Z}})$ in $\widetilde{G}(\mathbb{A})$.
A section $\Phi(s)\in I(s, \chi)$ is called
standard if its restriction to $\widetilde{K}_{\infty}\widetilde{K}$ is
independent of s. By Iwasawa decomposition  $\widetilde{G}_{\mathbb{A}}=\widetilde{M}_{\mathbb{A}}N^{\prime }_{\mathbb{A}} \widetilde{K}_{\infty}\widetilde{K}$, the function
$\lambda(\varphi)\in I(s_{0}, \chi)$ has a unique extension to a
standard section $\Phi(s)\in I(s, \chi)$, where
$\Phi(s_{0})=\lambda(\varphi)$.

For $g^{\prime} \in \widetilde{G}(\mathbb{A})$, $h \in O(V)(\mathbb{A})$ and $\varphi \in S(V)(\mathbb{A})$
, the theta series is defined as

$$\theta(g^{\prime}, h; \varphi)=\sum_{x\in V(\mathbb{Q})} \omega(g^{\prime})\varphi(h^{-1}x).$$
For an algebraic group $G$ over $\Q$, we write  $[G]=G(\Q) \backslash G(\A)$.
The theta  integral
\begin{equation}
I(g^{\prime}, \varphi)=\frac{1}{\vol([O(V)])} \int_{[O(V)]} \theta(g^{\prime}, h, \varphi)dh
\end{equation} is an automorphic form on $[\widetilde{G}]$ if it is convergent.
The Eisenstein series is given by
\begin{equation}
E(g^{\prime}, s; \Phi)=E(g^{\prime}, s; \varphi)= \sum_{\gamma\in \widetilde{P}(\mathbb{Q})\setminus \widetilde{G}(\mathbb{Q})}
\Phi(\gamma g^{\prime}, s).
\end{equation}
\begin{theorem} (Siegel- Weil formula) \label{theo:Siegel-Weil}
Assume that V is anisotropic or that $\dim(V)-r>2$, where r is the Witt index of V, so that the theta  integra is absolutely convergent. Then \\

$(1)$ $E(g^{\prime}, s; \Phi)$ is holomorphic at the point $s_{0}=m/2-1$, where $m=\dim(V)$, and
\begin{center}
$E(g^{\prime}, s_{0}; \Phi)=\kappa I(g^{\prime}, \varphi)$,
\end{center}
 where $\kappa=2$ when $m\leq2$ and $\kappa=1$ otherwise.\\

(2)If $m>1$, then
\begin{center}
$E(g^{\prime}, s_{0}; \Phi)=\kappa I(g^{\prime}, \varphi)=\frac{\kappa}{2}\int_{[SO(V)]} \theta(g^{\prime}, h; \varphi)dh $,
\end{center}
where dh is Tamagawa measure on $SO(V)(\mathbb{A})$.
\end{theorem}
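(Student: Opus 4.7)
The plan is to match the two automorphic forms on $[\widetilde{G}]$ by comparing their $\psi$-Fourier expansions along the unipotent $N'(\A)$. Before doing that, I would first verify convergence of $I(g',\varphi)$ under the Weil criterion ($V$ anisotropic or $\dim V - r > 2$) by standard reduction theory on $[\Orth(V)]$, and holomorphy of $E(g',s;\Phi)$ at $s_0 = m/2 - 1$, the latter coming from the local functional equations together with the observation that $\Phi$ sits in the image of $\lambda_V$, whose local components avoid the poles of the intertwining operator at $s_0$ in the range of $m$ under consideration.

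Next I would compute both Fourier expansions. For the Eisenstein series, I would apply the Bruhat decomposition $\widetilde G(\Q) = \widetilde P(\Q) \sqcup \widetilde P(\Q)\, w\, N(\Q)$ to unfold $E(g',s;\Phi)$ and extract, for each $T \in \Q$, a Whittaker-type coefficient which factors as a product $\prod_v W_{T,v}(g'_v, s; \Phi_v)$ of local integrals against $\psi_v(-Tb)$. For the theta integral, I would decompose the inner sum of $\theta(g',h;\varphi)$ into $\Orth(V)(\Q)$-orbits on $V(\Q)$; unfolding the integral over $[\Orth(V)]$ against a single orbit representative $x_T$ of norm $T$ produces, for $T \ne 0$, a product of local orbital integrals over $\Orth(V_v)/\Orth(V_v)_{x_T}$. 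The crux is then a local Siegel--Weil identity at each place $v$, identifying the local Whittaker coefficient of $\lambda_v(\varphi_v)$ at $s_0$ with the corresponding local orbital integral; this is a direct computation from the Weil representation formulas \eqref{weilrep} applied after the Weyl element, using the defining property $\lambda_v(\varphi_v)(g') = (\omega(g')\varphi_v)(0)$.

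The main obstacle, as in any proof of Siegel--Weil, is the degenerate $T = 0$ term. On the Eisenstein side it contributes $\Phi(g',s_0)$ together with the value $(M(s_0)\Phi)(g')$ of the intertwining operator, while on the theta side it comes from the singular vector $x = 0$ together with possible anisotropic contributions. Reconciling the two uses the product formula $\prod_v \gamma(V_v) = 1$ for the Weil index, and is precisely where the constant $\kappa$ is pinned down: it is forced to equal $2$ in the low-rank regime $m \le 2$ because of an additional convergence factor in the orbital integrals that is absent for $m > 2$. Finally, assertion (2) follows from (1) by observing that for $m > 1$ the quotient $\Orth(V)/\SO(V) \simeq \{\pm 1\}$ acts trivially on the theta kernel, so restricting the integration from $[\Orth(V)]$ to $[\SO(V)]$ introduces exactly the factor $1/2$ appearing in the statement.
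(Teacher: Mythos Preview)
The paper does not prove this theorem at all: it is stated in Section~\ref{sect:Kudla} as a background result and used as a black box, with the proof implicitly deferred to the cited literature (Weil \cite{Weil} for the convergent range and Kudla--Rallis \cite{KR1,KR2,KR3} for the holomorphy and the general formulation). So there is no ``paper's own proof'' to compare your proposal against.

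That said, your outline is the standard strategy and is broadly correct. A few remarks. First, the holomorphy of $E(g',s;\Phi)$ at $s_0$ is not a light consequence of ``local functional equations together with the observation that $\Phi$ sits in the image of $\lambda_V$''; in the range where $s_0\le 0$ (i.e.\ $m\le 2$) this is genuinely delicate and is one of the main contributions of \cite{KR1,KR2}. Second, your explanation of $\kappa$ is imprecise: the factor $2$ for $m\le 2$ does not arise from an ``additional convergence factor in the orbital integrals'' but rather from the behaviour of the intertwining operator $M(s)$ at $s_0\le 0$ (in particular, for $m=2$ one is at the center $s_0=0$ of the functional equation, and for $m=1$ one is to the left of it). Third, your argument for (2) is essentially right---any rational reflection permutes $V(\Q)$, so $\theta(g',h\epsilon;\varphi)=\theta(g',h;\varphi)$ for $\epsilon\in \Orth(V)(\Q)\setminus\SO(V)(\Q)$---but to extract the precise factor $1/2$ you must also keep track of how the Tamagawa measures on $\Orth(V)(\A)$ and $\SO(V)(\A)$ are normalized relative to each other and use that $\det:\Orth(V)(\Q)\to\{\pm 1\}$ is surjective for $m>1$; your one-line justification glosses over this bookkeeping.
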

%\begin{remark}When $m=\dim V$ is even, it is suffice to consider $G(\mathbb{A})$ action.
%\end{remark}

Let $V^{(1)}, V^{(2)}$ be two quadratic spaces with the same dimension and the same quadratic character $\chi$. There is a following diagram
\begin{equation}
\setlength{\unitlength}{1mm}
\begin{picture}(60, 20)
\linethickness{1pt}
\put(0,18){$S(V^{(1)}(\mathbb{A}))$}
\put(0,0){$S(V^{(2)}(\mathbb{A}))$}
\put(18,18){ \vector(3,-1){25}}
\put(18,0){ \vector(3,1){25}}
\thicklines
\put(45,8){$I(s_{0},\chi)$}
\put(25,16){$\lambda_{V^{(1)}}$}
\put(25,6){$\lambda_{V^{(2)}}$}
\end{picture}.
\end{equation}

There are analogous local maps
\begin{center}
$\lambda_{p}: S(V_{p})\rightarrow I_{p}(s_{0}, \chi_{p})$.
\end{center} Following Kudla \cite{KuIntegral},  we make the following definition.

\begin{definition} For an prime $p \le \infty$, $\varphi_p^{(i)} \in S(V_p^{(i)})$, $i=1, 2$,  are said to be matching if
$$
\lambda_{V_p^{(1)}} (\varphi_p^{(1)}) = \lambda_{V_p^{(2)}}(\varphi_p^{(2)}).
$$
$\varphi^{(i)}=\prod_p \varphi_p^{(i)} \in S(V^{(i)}(\A))$ are said to be matching if they match at each prime $p$.
\end{definition}

By the Siegel-Weil formula, we have the following Kudla matching principle (\cite[Section 4]{KuIntegral}):  Under the assumption of Theorem  \ref{theo:Siegel-Weil} for both $V^{(1)}$ and $V^{(2)}$,
one has  matching  pair $(\varphi^{(1)}, \varphi^{(2)})$ and the following identity:
\begin{equation} \label{eq:matching}
I(g^{\prime}, \varphi^{(1)})=I(g^{\prime}, \varphi^{(2)}).
\end{equation}
This implies that their Fourier coefficients are equal, which we use in this paper. Comparing coefficients of both sides, I obtain main results of this paper.

\begin{comment}

\note{Do we need this?} An lattice  $L$ of $V$ is a free $\Z$-submodule of $V$ of rank $m$. It is even integral if $Q(x) \in \Z$ for every $x \in L$. We define  the dual of $L$ as
$$
L^\sharp=\{ x \in  V:\,  (x, L) \subset \Z \}.
$$
Locally,
$$
L_p^\sharp =\{ x \in  V_p :\,  (x, L_p) \subset \Z_p \}.
$$
$L_p$ is called self-dual if $L_p^\sharp =L_p$.
\end{comment}

\section{Matchings on quadratic spaces}  \label{sect:matching}

 There are two quaternion algebras over a local field $\Q_p$, the  matrix algebra  $B^{sp}=M_2(\Q_p)$ (split quaternion) and the division quaternion $B^{ra}$ (ramified  quaternion). Let $V^{sp}=B_{0}^{sp}$ or $V^{ra}=B_{0}^{ra}$ be the associated three dimensional quadratic space with reduced norm,  $$B_{0}^{sp}=\{x \in B^{sp} \mid \tr(x)=0\}, \quad B_{0}^{ra}=\{x \in B^{ra} \mid \tr(x)=0\},$$ where $\tr$ is the reduced trace. Both spaces have the same quadratic character $\chi_{p}=(x, -1)_{p}$. We have $\widetilde{G}(\Q_p)$-intertwining operators
$$
\lambda: S(V) \rightarrow I(\frac{1}{2}, \chi_{p} ),$$
$$\lambda(\varphi)(g^{\prime})=\omega(g^{\prime}) \varphi(0),$$
where $I(\frac{1}{2}, \chi_{p} )$ are the same for associated space $V^{sp}$ and $V^{ra}$.

We will use superscript $sp$  and $ra$ to indicate the  association with $B_{0}^{sp}$ and $B_{0}^{ra}$ respectively. It is known (\cite{KuIntegral}) that $\lambda^{sp}$ is surjective. So every function $\varphi^{ra}$ in $S(V^{ra})$ has matching element. The purpose of this section is to give some explicit matching pairs and to obtain some interesting local and global identities.

\subsection{ The finite prime case $p <\infty$}  We assume $p <\infty$ in this subsection. Let $\OO_{B^{ra}}$ be the maximal order in $B^{ra}$, which consists of all elements of $B$ whose reduced norm is in $\Z_p$. We don't use the subscript $p$ for simplicity in this subsection.

 Let $\kay$ be the unique unramified quadratic field extension of $\Q_p$,  and let $\OO_\kay=\Z_p + \Z_p u$ be the ring of integers of $\kay$ with $u \in \OO_\kay^\times$. Fix one optimal embedding
 $ \kay  \hookrightarrow  B^{ra}$, then there is a uniformizer $\pi$ of $B$ such that $\pi r = \bar r \pi$ for $r \in \kay$ and $\pi^\iota =-\pi$ and $ \pi^2 =p$ (see \cite{Gross}). Then one has
$$\OO_{B^{ra}} = \OO_\kay + \OO_\kay \pi = \Z_p + \Z_p u + \Z_p \pi + \Z_p u p.
$$

Let $L^{ra}=\OO_{B^{ra}} \bigcap V^{ra}$ and $L^{sp}= M_2(\Z_p) \bigcap V^{sp}$. Then there is a sublattice of $L^{sp}$
$$
L_1^{sp}= \left\{ A =\abcd \in L^{sp}:\,  c \equiv 0 \pmod  p \right\}.
$$
The dual lattices are given by
$$
L^{ra, \sharp} =\OO_{B^{ra}}^{\sharp} \bigcap V^{ra} , \quad  L_1^{sp, \sharp} = \left\{ \abcd \in V^{sp}:\,  a, c, d \in \Z_p,  b \in \frac{1}p \Z_p\right\},
$$
where $\OO_{B^{ra}}^{\sharp}=\pi^{-1} \OO_{B^{ra}}$,
$\pi \in B^{ra}$ is the uniformizer  , and
$$
L^\sharp =\{ x \in  V :\,  (x, L) \subset \Z_p \}.
$$

  We denote
\begin{equation}
\varphi^{ra} =\cha (L^{ra}), \quad  \varphi^{ra, \sharp}= \cha (L^{ra, \sharp}), \quad
\end{equation}
and
\begin{equation} \label{eq:varphi}
\varphi^{sp} =  \cha(L^{sp}),  \quad \varphi_1^{sp} =  \cha(L_1^{sp}) \quad   \hbox{ and }  \varphi_1^{sp, \sharp}= \cha(L_1^{sp, \sharp}).
\end{equation}

So one has isomorphisms
$$
(Z/p)^2 \cong \OO_{B^{ra}}^{ \sharp}/\OO_{B^{ra}}, \quad (Z/p)^2 \cong L^{ra, \sharp}/L^{ra}.
$$

\begin{lemma}\cite[Lemma 14.3]{KRYComp}\label{weilindex}
For the character $\psi$, the Weil index\\
$\gamma(V^{ra})=-1$, $\quad \gamma(V^{sp})=1$, when  $p\neq 2$,\\
and $\gamma(V^{ra})=-\zeta_8^{-1}$, $\quad \gamma(V^{sp})=\zeta_{8}^{-1}$, when $p= 2$.
\end{lemma}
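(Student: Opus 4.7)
The plan is to diagonalize each of the three-dimensional quadratic spaces $V^{sp}$ and $V^{ra}$ and then to apply the multiplicative formula $\gamma(V) = \prod_{i} \gamma_\psi(a_i)$ that holds for any orthogonal decomposition $V \cong \langle a_1, a_2, a_3\rangle$. After this reduction the lemma becomes a matter of evaluating the one-variable Weil index $\gamma_\psi$ on a short list of elements of $\Q_p^\times$.

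For $V^{sp}$, the basis $e_1 = \kzxz{1}{0}{0}{-1}$, $e_2 = \kzxz{0}{1}{1}{0}$, $e_3 = \kzxz{0}{1}{-1}{0}$ of trace-zero matrices gives the determinant form $Q(xe_1+ye_2+ze_3) = -x^2 - y^2 + z^2$, i.e.\ $V^{sp} \cong \langle -1, -1, 1\rangle$. For $V^{ra}$, using the embedding $\kay \hookrightarrow B^{ra}$ and the uniformizer $\pi$ with $\pi^2 = p$, $\pi r = \bar r\pi$ from the excerpt, the trace-zero subspace has orthogonal basis $u, \pi, u\pi$, on which the reduced norm diagonalizes as $\langle -u^2, -p, p u^2\rangle$. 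For odd $p$ this is $\langle -\epsilon, -p, p\epsilon\rangle$ with $\epsilon \in \Z_p^\times$ a non-square, and for $p=2$ one does the analogous computation with a suitable Artin--Schreier generator of $\OO_\kay$.

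With the diagonalizations in hand, I would substitute the standard values. For odd $p$ and $\psi$ of conductor $\Z_p$ one has $\gamma_\psi(a^2)=1$ and the cocycle identity $\gamma_\psi(ab)=\gamma_\psi(a)\gamma_\psi(b)(a,b)_p$; the Hilbert symbols $(-1,-1)_p$, $(-\epsilon,-p)_p$, $(-\epsilon,p\epsilon)_p$, $(-p,p\epsilon)_p$ are elementary for $p$ odd and the products collapse to $\gamma(V^{sp}) = 1$ and $\gamma(V^{ra}) = -1$. Structurally, the ratio $\gamma(V^{ra})/\gamma(V^{sp})$ encodes the Hasse invariant of the quaternion algebra and so is forced to differ by a sign between the split and ramified cases, which is a useful sanity check.

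The main obstacle is the case $p=2$, where $\gamma_\psi$ takes values in the eighth roots of unity and the answers depend on delicate congruences modulo $8$. Here I would invoke the classical tables of $\gamma_{\psi_2}(a)$ for representatives of $\Q_2^\times/\Q_2^{\times,2}$ (as in Weil's original paper or Rao's explicit formulas), combined with Hilbert symbol reciprocity at $p=2$, to verify $\gamma(V^{sp}) = \zeta_8^{-1}$ and $\gamma(V^{ra}) = -\zeta_8^{-1}$. Since the statement is exactly \cite[Lemma 14.3]{KRYComp}, the cleanest route is to quote that reference directly once the structural setup above is recorded; the proof proposal essentially explains why the computation must come out this way rather than redoing the dyadic Gauss sums from scratch.
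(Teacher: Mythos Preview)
The paper gives no proof of this lemma at all: it is stated with the citation \cite[Lemma 14.3]{KRYComp} and used as a black box. Your proposal therefore goes well beyond what the paper does, and the approach you outline---diagonalize $V^{sp}$ and $V^{ra}$ and apply the multiplicativity of the Weil index together with standard values of $\gamma_\psi$ on $\Q_p^\times/\Q_p^{\times,2}$---is the standard computation and is correct in outline (one small caveat: the element $u$ with $\OO_\kay=\Z_p+\Z_p u$ need not itself be trace-zero, so for the orthogonal basis of $V^{ra}$ you should pass to a trace-zero generator of $\kay$, e.g.\ $\sqrt{\epsilon}$ for $p$ odd). As you yourself conclude, simply citing \cite{KRYComp} is the cleanest route, and that is exactly what the paper does.
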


% \begin{proof}
%By \cite[Lemma 4.2]{Kulocal}, for the quadratic space $V^{ra}$ and $V^{sp}$ with the quadratic form %$$Q(x)=a_{1}x_{1}^{2}+a_{2}x_{2}^{2}+ \ldots +a_{m}x_{m}^{2}, \quad a_{i} \in \Q^{\star},$$ the Weil index
%$$\gamma(\psi\circ Q)=\gamma(\det(Q), \psi)\gamma(\psi)^{m}\varepsilon(Q).$$
%From \cite[A.11]{RR}, $\gamma(\psi)=1$, so $$\gamma(V^{ra})=\gamma(B^{ra})-1, \quad %\gamma(V^{sp})=\gamma(B^{sp})=1.$$
%\end{proof}
The following are local matching pairs which are need in this paper:
\begin{proposition}  \label{localmatch} Let notations be as above. Then

(1) \quad $ \varphi^{ra} \in S(V^{ra})$ matches with $\frac{-2}{p-1}\varphi^{sp}+\frac{p+1}{p-1}\varphi_1^{sp} \in S(V^{sp})$.

(2) \quad $\varphi^{ra, \sharp} \in S(V^{ra})$ matches with $\frac{2p}{p-1}\varphi^{sp}-\frac{p+1}{p-1}\varphi_1^{sp, \sharp} \in S(V^{sp})$.

\end{proposition}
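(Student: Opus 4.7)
The plan is to verify both matching identities as equalities inside $I(\tfrac12,\chi_p)$ by reducing them, via Iwasawa and Bruhat, to equality at $g'=e$ and $g'=w$. Since $\lambda$ satisfies $\lambda(\omega(g')\varphi)(h)=\lambda(\varphi)(hg')$, a standard section is determined by its restriction to $\widetilde{K}$, and the restriction is further controlled by $\omega$-invariance of the underlying Schwartz function under a suitable open compact subgroup $H\subset\widetilde{K}$. So the strategy is: (a) find a common such $H$ fixing both sides; (b) use Iwasawa and Bruhat to reduce matching to the two values at $e$ and $w$; (c) evaluate using the self-dual volumes of the lattices and the Weil indices of Lemma \ref{weilindex}.

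I first record the lattice data. From $[L^{ra,\sharp}:L^{ra}]=[L_1^{sp,\sharp}:L_1^{sp}]=p^2$ and the self-duality of $L^{sp}$, the self-dual volumes are
\begin{equation*}
\vol(L^{ra})=\vol(L_1^{sp})=p^{-1},\quad \vol(L^{sp})=1,\quad \vol(L^{ra,\sharp})=\vol(L_1^{sp,\sharp})=p,
\end{equation*}
and from Lemma \ref{weilindex} the ratio $\gamma(V^{ra})/\gamma(V^{sp})=-1$ is independent of $p$. For identity (1) I claim that the (preimage in $\widetilde{K}$ of the) Iwahori $\Gamma_0(p)$ stabilizes $\varphi^{ra}$, $\varphi^{sp}$, and $\varphi_1^{sp}$ under $\omega$: all three have $Q$-values in $\Z_p$ on their supports, so $\omega(n(b))$ fixes them for $b\in\Z_p$, and $\omega(m(a))$ fixes them for $a\in\Z_p^\times$ since $\chi^\psi$ is trivial there (at $p=2$ the factor $\gamma(a,\psi)$ coincides for the two spaces because both have the same quadratic character $(\cdot,-1)_p$); invariance under $\omega(n^-(c))$ for $c\in p\Z_p$ follows from the Bruhat factorization $n^-(c)=n(c^{-1})m(-c^{-1})wn(c^{-1})$ together with the Fourier formula $\omega(w)\cha(L)=\gamma(V)^{-1}\vol(L)\cha(L^\sharp)$. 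By Bruhat in $\SL_2(\F_p)$, $\widetilde{P}(\Q_p)\backslash\widetilde{G}(\Q_p)/\widetilde{\Gamma_0(p)}=\{[e],[w]\}$, so matching reduces to those two points. At $e$, $\lambda(\varphi)(e)=\varphi(0)=1$ on both sides by $-\tfrac{2}{p-1}+\tfrac{p+1}{p-1}=1$. At $w$, $\lambda(\varphi)(w)=\gamma(V)^{-1}\vol(\supp\varphi)$, giving $\gamma(V^{ra})^{-1}/p$ on the left and $\gamma(V^{sp})^{-1}\bigl(-\tfrac{2}{p-1}+\tfrac{p+1}{p(p-1)}\bigr)=-\gamma(V^{sp})^{-1}/p$ on the right, which agree by the Weil-index ratio.

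For identity (2) I replace $\Gamma_0(p)$ by its $w$-conjugate $\Gamma^0(p)$: this is the stabilizer of $\varphi^{ra,\sharp}$ and $\varphi_1^{sp,\sharp}$ because their $Q$-values lie in $p^{-1}\Z_p$, and the identity $\cha(L^\sharp)=\gamma(V)\vol(L)^{-1}\omega(w)\cha(L)$ converts $\Gamma_0(p)$-invariance of $\cha(L)$ into $\Gamma^0(p)$-invariance of $\cha(L^\sharp)$; $\varphi^{sp}$ is fully $\widetilde{K}$-invariant. Bruhat again gives a two-element double coset space with representatives $e$ and $w$, and the evaluations close the case: at $e$ both sides give $1$ since $\tfrac{2p}{p-1}-\tfrac{p+1}{p-1}=1$, while at $w$ the left side equals $\gamma(V^{ra})^{-1}p=-p\,\gamma(V^{sp})^{-1}$ and the right side equals $\gamma(V^{sp})^{-1}\bigl(\tfrac{2p}{p-1}-\tfrac{p+1}{p-1}\cdot p\bigr)=-p\,\gamma(V^{sp})^{-1}$.

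The main obstacle I expect is step (a), specifically the right-$\Gamma_0(p)$-invariance for $\varphi^{ra}$ and, symmetrically, the right-$\Gamma^0(p)$-invariance for $\varphi^{ra,\sharp}$: the lower-triangular generator $n^-(c)$ does not act transparently through the formulas (\ref{weilrep}), and one must pass through $w$ and keep careful track of the Fourier transform applied to $\cha(L^{ra})$, in particular at $p=2$ where the Weil-index corrections require an extra check that the various unit factors cancel between the two spaces. Once this invariance step is in place, the remaining computations reduce to the arithmetic identities displayed above.
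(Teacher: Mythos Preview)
Your approach is essentially the same as the paper's: both reduce matching to a two-dimensional computation inside $I(\tfrac12,\chi_p)^{\widetilde{K_0(p)}}$ (the paper's $K_0(p)$ is your Iwahori $\Gamma_0(p)$), verify invariance of the relevant Schwartz functions under this subgroup, and then evaluate $\lambda(\varphi)$ at $1$ and $w$ using $\lambda(\cha(L))(w)=\gamma(V)^{-1}\vol(L)$ together with Lemma~\ref{weilindex}. The paper carries out the $n_-(c)$-invariance check for $\varphi^{ra}$ explicitly by computing $\omega(w)\varphi^{ra}=\gamma(V^{ra})^{-1}\vol(L^{ra})\,\varphi^{ra,\sharp}$ and observing that $\psi_p(-c\det x)=1$ on $L^{ra,\sharp}$ for $c\in p\Z_p$, which is exactly the mechanism behind your Bruhat factorization $n^-(c)=n(c^{-1})m(-c^{-1})wn(c^{-1})$; for part~(2) the paper likewise passes to $K_0^+(p)=\{b\equiv 0\ (p)\}$, i.e.\ your $w$-conjugate $\Gamma^0(p)$, and leaves the parallel computation to the reader. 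Your numerical evaluations at $e$ and $w$ agree with the paper's, and your observation that the ratio $\gamma(V^{ra})/\gamma(V^{sp})=-1$ is uniform in $p$ is precisely how Lemma~\ref{weilindex} enters.
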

\begin{proof} (1) \quad  Since $$\SL_2(\Z_p) =K_0(p) \cup N(\Z_p)  w K_0(p), $$ one has
\begin{equation}\label{decomp}
\widetilde{\SL_2(\Z_p)} = \widetilde{K_0(p)} \cup  N^{\prime}(\Z_p) w^{\prime} \widetilde{K_0(p)},
\end{equation}
where $$K_0(p)= \left\{ \abcd \in \SL_{2}(\Z_p):\,  c \equiv 0 \pmod  p\right\}, w^{\prime}=[w, 1].$$
The dimension of $I(\frac{1}{2}, \chi_{p} )^{\widetilde{K_0(p)}}$ is $2$, and $\Phi \in I(\frac{1}{2}, \chi_{p} )^{\widetilde{K_0(p)}}$ is determined by $\Phi(1)$ and $\Phi(w^{\prime})$ from the group decomposition (\ref{decomp}).

 Notice that $K_0(p)$ is generated by $n(b)$ and
$n_-(c) = w^{-1} n(-c) w$, $b \in \Z_p$ and $c \in p \Z_p$, so $\widetilde{K_0(p)}$ is generated by $[n(b), 1]$, $[n_-(c), 1]$ and $[1,\varepsilon ]$, where $\varepsilon= \pm 1$.  Then one can check that $\varphi^{ra}$, $\varphi^{sp}$, $\varphi_1^{sp}$ are all $\widetilde{K_0(p)}$-invariant for the Weil representation. We check
 $\omega([n_-(-c), 1])\varphi^{ra} =\varphi^{ra}$ and leave others to the reader.  One has
$$
\omega^{ra}(w^{\prime})\varphi^{ra}(x) =\gamma(V^{ra}) \varphi^{ra, \sharp}(x) \vol(L^{ra}).
$$
So
$$
\omega^{ra}([n(-c),1]w^{\prime})\varphi^{ra}(x)=\gamma(V^{ra})\vol(L^{ra}) \psi_p(-c \det (x)) \varphi^{ra, \sharp}(x)
=\gamma(V^{ra}) \varphi^{ra, \sharp}(x) \vol(L^{ra}),
$$
i.e.,
$$
\omega^{ra}([n(-c),1]w^{\prime})\varphi^{ra}= \omega^{ra}(w^{\prime})\varphi^{ra}.
$$
Then
$$
\omega^{ra}([n_-(c), 1]) \varphi^{ra} = \omega^{ra}( w^{\prime, -1}) \omega^{ra}([n(-c), 1]w^{\prime})\varphi^{ra} =\varphi^{ra}
.$$
Notice that $ w^{\prime, -1}=[w^{-1}, 1]$  when p is odd , $ w^{\prime, -1}=[w^{-1}, -1]$  when p is 2 \cite{HM}.

It is easy to know
$\omega^{ra}([n(b), 1])\varphi^{ra} =\varphi^{ra},$ and
$\omega^{ra}([1, \varepsilon])\varphi^{ra} =\varphi^{ra}$, so as claimed $\lambda^{ra}(\varphi^{ra}) \in I(\frac{1}{2}, \chi_{p} )^{\widetilde{K_0(p)}}$.

Now we have $\lambda^{ra}(\varphi^{ra}), \lambda^{sp}(\varphi^{sp}), \lambda^{sp}(\varphi_1^{sp}) \in I(\frac{1}{2}, \chi_{p} )^{\widetilde{K_0(p)}}$. Direct calculation gives
\begin{align*}
\lambda^{ra}(\varphi^{ra})(1) &= 1,  \quad \lambda^{ra}(\varphi^{ra})(w^{\prime})= \gamma(V^{ra})^{-1} p^{-1}
\\
\lambda^{sp}(\varphi^{sp})(1)  &= 1,  \quad  \lambda^{sp}(\varphi^{sp})(w^{\prime})  = \gamma(V^{sp})^{-1},
\\
\lambda^{sp}(\varphi_1^{sp})(1)  &= 1,  \quad  \lambda^{sp}(\varphi_1^{sp})(w^{\prime})  = \gamma(V^{sp})^{-1}p^{-1}.
\end{align*}
From Lemma \ref{weilindex}, one has
$$
\lambda^{ra}(\varphi^{ra})= \frac{-2}{p-1} \lambda^{sp}(\varphi^{sp})  + \frac{p+1}{p-1} \lambda^{sp}(\varphi_1^{sp}).
$$
This proves (1). Claim (2) is similar and is left to the reader. One just needs to replace $K_0(p)$ by
$$
K_0^+(p) = \left\{ \abcd \in \SL_2(\Z_p):\,  b \equiv 0 \pmod  p \right\}.
$$

\end{proof}

\subsection{The case $p=\infty$} In this subsection, we consider the case $\Q_p=\R$ and  recall a matching pair given in \cite{KuIntegral}.  Notice that $B^{ra}$ in this case is the Hamilton division algebra, and $V^{ra}$ has signature $(3, 0)$. Let $\varphi_\infty^{ra} (x) =e^{ - 2 \pi \det (x)} \in S(V^{ra})$, then $\varphi_\infty^{ra}$ is of weight $3/2$ in the sense
$$
\omega^{ra}(k_\theta^{\prime}) \varphi_\infty^{ra} = e^{\frac{3}{2} i \theta}   \varphi_\infty^{ra}, \quad k_\theta = \kzxz {\cos\theta} {\sin\theta} {-\sin\theta} {\cos\theta}, \quad k_\theta^{\prime}=[k_\theta, 1]
$$
On the other hand, Kudla constructed a family of  weight $3/2$ Schwartz function $\varphi_\infty^{sp} \in S(V^{sp})$  as follows \cite[Section 4.8]{KuIntegral}. Recall $V^{sp} =\{x\in  M_2(\R) \mid \tr(x)=0\}.$ Given an orthogonal decomposition
\begin{equation} \label{eq:spacedecomposition}
V^{sp} = V^+ \oplus V^-,  \quad x = x^+ + x^-,
\end{equation}
with $V^+$ of signature $(1,0)$ and $V^-$ of signature $(0, 2)$. One defines (Kudla used the notation $\tilde\varphi(x, z)$)
$$
\varphi_\infty^{sp} (x, V^-) = (4\pi (x^+, x^+) -1) e^{ -\pi (x^+, x^+) + \pi (x^-, x^-)}.
$$
Kudla proved the following proposition \cite[Section 4.8]{KuIntegral}.

\begin{proposition} \label{prop3.3}   For any orthogonal decomposition  (\ref{eq:spacedecomposition}), $(\varphi_\infty^{ra}, \varphi_\infty^{sp}(x , V^-))$ is a matching pair, and their (same) image in $I(\frac{1}{2}, \chi_{\infty} )$ is the unique weight $3/2$ section $\Phi_\infty^{\frac{3}{2}}$ given by
$$
\Phi_\infty^{\frac{3}{2}}(n(b) m(a) k_\theta^{\prime})= |a|^{\frac{3}{2}} e^{\frac{3}{2} i \theta}.
$$
\end{proposition}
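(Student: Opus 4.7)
The plan is to use the $\widetilde{K}_\infty$-weight decomposition of the principal series to reduce the matching to a one-dimensional calculation. As a $\widetilde{K}_\infty$-module, $I(\tfrac12,\chi_\infty)$ decomposes into one-dimensional weight spaces indexed by $\ell\in\tfrac12+\Z$, with the standard section $\Phi_\infty^\ell$ determined by $\Phi_\infty^\ell(k_\theta^{\prime}) = e^{i\ell\theta}$ spanning the weight-$\ell$ line. Thus, if I can exhibit both $\lambda^{ra}(\varphi_\infty^{ra})$ and $\lambda^{sp}(\varphi_\infty^{sp}(\,\cdot\,,V^-))$ as pure weight-$3/2$ eigenvectors under $\omega(k_\theta^{\prime})$, both are automatically scalar multiples of $\Phi_\infty^{3/2}$, and the scalars are pinned down by the single value $\lambda(\varphi)(1) = \varphi(0)$.

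For the ramified side, $V^{ra}$ is positive definite of dimension $3$ and $\varphi_\infty^{ra}(x) = e^{-2\pi\det(x)}$ is the standard Gaussian. A classical calculation using the generators in (\ref{weilrep}) -- in particular the fact that the Gaussian is its own Fourier transform up to a Weil index -- yields $\omega^{ra}(k_\theta^{\prime})\varphi_\infty^{ra} = e^{(3/2)i\theta}\varphi_\infty^{ra}$, so that $\lambda^{ra}(\varphi_\infty^{ra}) = \varphi_\infty^{ra}(0)\,\Phi_\infty^{3/2} = \Phi_\infty^{3/2}$.

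For the split side, the function $\varphi_0(x,V^-) = e^{-\pi(x^+,x^+)+\pi(x^-,x^-)}$ attached to the majorant decomposition (\ref{eq:spacedecomposition}) is a Gaussian of pure weight $1/2$ under $\omega^{sp}(k_\theta^{\prime})$. The polynomial prefactor $4\pi(x^+,x^+) - 1$ is precisely the effect of one application of the Maass weight-raising operator on the positive-definite line $V^+$, so $\varphi_\infty^{sp}(\,\cdot\,,V^-)$ is of pure weight $3/2$, and $\lambda^{sp}(\varphi_\infty^{sp}(\,\cdot\,,V^-))$ is a scalar multiple of $\Phi_\infty^{3/2}$. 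I would verify this weight shift, as well as the overall normalization, by computing $\omega^{sp}(w^{\prime})\varphi_\infty^{sp}(\,\cdot\,,V^-)$ directly as a Fourier transform of a quadratic-polynomial-times-Gaussian, and combining with the (straightforward) $n(b)$ and $m(a)$ actions via the Iwasawa decomposition.

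The main obstacle is precisely this weight-$3/2$ and normalization verification on the split side: the mixed signs in the exponent together with the quadratic prefactor make the $w^{\prime}$ action more intricate than in the positive-definite case, and one must track the Weil index $\gamma(V^{sp})$ at infinity carefully to conclude that the resulting section is exactly $\Phi_\infty^{3/2}$ rather than a nontrivial multiple thereof. Once the weight and the scalar are secured on the split side, the identity $\lambda^{ra}(\varphi_\infty^{ra}) = \lambda^{sp}(\varphi_\infty^{sp}(\,\cdot\,,V^-)) = \Phi_\infty^{3/2}$, and hence the full proposition, follows formally from the one-dimensionality of the weight-$3/2$ subspace of $I(\tfrac12,\chi_\infty)$; the independence of the orthogonal decomposition (\ref{eq:spacedecomposition}) is then automatic, since the image lands in a canonical line that has no reference to the choice of $V^-$.
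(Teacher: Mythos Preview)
Your proposal is reasonable and follows the standard line of argument for such results, but you should be aware that the paper itself does not give a proof of this proposition: it is stated as a result of Kudla and attributed to \cite[Section 4.8]{KuIntegral}, with no independent argument supplied. So there is no ``paper's own proof'' to compare against beyond the citation.

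That said, your outline is essentially the argument one finds in Kudla's work: the one-dimensionality of each $\widetilde{K}_\infty$-weight space in $I(\tfrac12,\chi_\infty)$ reduces everything to checking the weight and the value at the identity. Your identification of the polynomial factor $4\pi(x^+,x^+)-1$ as the result of a weight-raising operator applied to the Gaussian $\varphi_0$ is the right conceptual explanation, and the normalization $\varphi_\infty^{sp}(0,V^-)=-1$ then needs to be reconciled with the claim that the image is exactly $\Phi_\infty^{3/2}$; in Kudla's treatment this sign is absorbed in the passage from $\varphi_0$ to $\tilde\varphi$ via the intertwining, so you would need to track that carefully rather than simply evaluating at $x=0$. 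This is the only point where your sketch is slightly loose, but it is a bookkeeping matter rather than a gap in strategy.
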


Because of this matching pair, we will simply write  $\varphi_\infty^{sp}$ for $\varphi_\infty^{sp} (\, ,  V^-)$.

\subsection{Global matching}
 \begin{comment} For a square-free  positive integer $D$, let $B(D)$ be the unique quaternion algebra over $\Q$ of discriminant $D$ as in the introduction. It is indefinite (i.e., $B(D)_\R \cong M_2(\R)$) if and only if $D$ has odd number of prime factors and it is $M_2(\Q)$, i.e., $D=1$,  precisely when it represents $0$.
 \end{comment}
 The following global matching result is clear from Kudla's matching principle (\ref{eq:matching}), Propositions \ref{localmatch} and \ref{prop3.3}.

\begin{proposition}  \label{globalmatch} Let $D_1, D_2 >1$ be two square free integers, and  let $V(D_i)$ be the tenary quadratic spaces associated to  the quaternion algebras $B(D_i)$ over $\Q$ (with reduced norm as the quadratic form), $i=1, 2$. Assume that $\varphi^{(i)}=\prod_p \varphi_p^{(i)} \in S(V(D_i)(\A))$ satisfy the following conditions:

(1) \quad When $p =\infty$,   $\varphi_\infty^{(i)}$  is $\varphi_\infty^{sp}$ or $\varphi_\infty^{ra}$ depending on whether $V(D_i)_\infty$ is  split or non-split.

(2) \quad When $p \nmid D_1 D_2 \infty$ or $p | \hbox{gcd}(D_1, D_2)$, we identify $V(D_1)_p = V(D_2)_p$ and take any $\varphi_p^{(1)}= \varphi_p^{(2)} \in S(V(D_1)_p)$.

(3) \quad When $ p|\hbox{lcm}(D_1, D_2)$ but $p\nmid \hbox{gcd}(D_1, D_2)$, one of $V(D_i)_p$ is $V_p^{sp}$ and the other one is $V_p^{ra}$, we take $(\varphi_p^{(1)}, \varphi_p^{(2)})$ to be a matching pair in Propositions \ref{localmatch}.

Then $(\varphi^{(1)}, \varphi^{(2)})$ is a global matching pair, and
$$
I(g^{\prime}, \varphi^{(1)}) = I(g^{\prime}, \varphi^{(2)}), \quad g^{\prime} \in \widetilde{G}(\mathbb{A}).
$$
\end{proposition}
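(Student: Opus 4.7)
The plan is to check matching at every place using the local results already established, and then invoke Kudla's matching principle (\ref{eq:matching}) to pass from sections of the same Eisenstein series to equal theta integrals.

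First, since $D_1, D_2 > 1$ and $m = \dim V(D_i) = 3$, each $B(D_i)$ is a division algebra over $\Q$, hence each trace-zero space $V(D_i)$ is anisotropic over $\Q$. This puts us squarely in the convergent regime of the Siegel--Weil formula (Theorem \ref{theo:Siegel-Weil}), with $\kappa = 1$ and $s_0 = 1/2$, and ensures that $I(g^{\prime}, \varphi^{(i)})$ equals $E(g^{\prime}, s_0; \Phi^{(i)})$, where $\Phi^{(i)}$ is the standard section extending $\lambda_{V(D_i)}(\varphi^{(i)})$.

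Next I would verify local matching place by place. At $p = \infty$, when $V(D_1)_\infty$ and $V(D_2)_\infty$ are of the same type (both split or both non-split) condition (1) picks the same Schwartz function on both sides, so matching is tautological; when they are of opposite types, Proposition \ref{prop3.3} supplies the required match between $\varphi_\infty^{sp}$ and $\varphi_\infty^{ra}$. At a finite prime $p \nmid D_1 D_2$ both local spaces are split and condition (2) takes the same function, giving trivial matching; at a finite prime $p \mid \gcd(D_1, D_2)$ both local spaces are ramified, and again matching is trivial under (2). At each remaining finite prime $p$, which by hypothesis divides exactly one of $D_1, D_2$, condition (3) together with Proposition \ref{localmatch} produces a local matching pair.

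Assembling these place-wise matchings, one obtains $\lambda_{V(D_1)}(\varphi^{(1)}) = \lambda_{V(D_2)}(\varphi^{(2)})$ globally in $I(s_0, \chi)$. Extending to a common standard section $\Phi(s) \in I(s, \chi)$ gives the same Eisenstein series $E(g^{\prime}, s; \Phi)$ on both sides, and the Siegel--Weil identity then yields
\begin{equation*}
I(g^{\prime}, \varphi^{(1)}) = E(g^{\prime}, s_0; \Phi) = I(g^{\prime}, \varphi^{(2)}).
\end{equation*}
The only mild subtlety is the check that $\chi_{V(D_1)} = \chi_{V(D_2)}$, so both global sections actually live in the same principal series; this is explicit at each local place (the character is $(\,\cdot\,, -1)$, as recorded at the start of Section \ref{sect:matching}), making the term-by-term assembly legitimate. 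The real content of the statement has already been discharged in Propositions \ref{prop3.3} and \ref{localmatch}; what remains is the packaging just described, so no deeper obstacle is expected.
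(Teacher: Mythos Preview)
Your proposal is correct and follows exactly the approach indicated in the paper, which simply declares the proposition ``clear from Kudla's matching principle (\ref{eq:matching}), Propositions \ref{localmatch} and \ref{prop3.3}'' without further elaboration. You have merely spelled out the place-by-place verification and the invocation of Siegel--Weil that the paper leaves implicit; your additional remark on the anisotropy of $V(D_i)$ for $D_i>1$ and on the equality of the characters $\chi_{V(D_i)}$ is accurate and fills in details the paper omits.
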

Let $V(D)=\{x \in B(D) \mid \tr(x)=0\}$ be the quadratic space, where  $B(D)$ is quaternion algebra with discriminant $D$. Recall that a Eichler order of conductor $N$ denoted by $\OO_D(N)$,  is an order of $B(D)$ such that
\begin{enumerate}
\item  When $p|D$, $\OO_D(N)_p:=\OO_D(N) \otimes_\Z \Z_p$ is the maximal order in division quaternion algebra $B(D)_p=B_p^{ra}$.

\item When $p\nmid D\infty$,   there is an identification $B(D)_p \cong M_2(\Q_p)$ under which
$$
\OO_D(N)_p:=\left \{ \abcd \in M_2(\Z_p): \,  c \equiv 0 \mod N\right\}.
$$
\end{enumerate}
Now let the lattice $L_{D}(N)= \OO_D(N) \bigcap V(D)$ in the space $V(D)$.  Then we know
\begin{enumerate}
\item  When $p|D$, $L_D(N)_p:=\L_D(N) \otimes_\Z \Z_p$ is a sublattice of the maximal order in $B(D)_p=B_p^{ra}$, denoted by $L^{ra}_{p}$.
\item When $p\nmid D\infty$,
$$
L_D(N)_p:=\left \{ x=\abcd \in M_2(\Z_p): \,  \tr(x)=0, c \equiv 0 \mod N\right\},
$$ denoted by $L^{sp}_{p}$.
\end{enumerate}
 Let $V^{(1)}=V(Dp)$ and $V^{(2)}=V(Dq)$. Define $\varphi^{(1)} =\prod_l \varphi_l^{(1)} \in S(V^{(1)}(\A))$ as follows,
$$
\varphi_l^{(1)} =\begin{cases}
  \varphi_\infty^{ra}(\varphi_\infty^{sp}) &\ff l =\infty, D \ has \ even(odd) \ number\ primes,
  \\
 \cha( L_l^{sp})  &\ff   l \nmid Dpq,
 \\
 \varphi_{l}^{ra}     &\ff l | Dp,
 \\
 \frac{-2}{l-1} \varphi_{l}^{sp} + \frac{l+1}{l-1} \varphi_{l, 1}^{sp}   &\ff  l=q
 \end{cases}
$$
where $\varphi_{l}^{sp}$ , $\varphi_{l, 1}^{sp}$and $\varphi_l^{ra}$ are the functions defined in  (\ref{eq:varphi}) with added subscript $l$. Then one has
$$
\varphi_f^{(1)} = \frac{-2}{q-1} \cha(\widehat{L_{Dp}(N)}) + \frac{q+1}{q-1}\cha(\widehat{L_{Dp}(Nq)}).
$$
So
\begin{equation}
I(\tau, \varphi^{(1)}) = \frac{-2}{q-1} I(\tau, L_{Dp}(N)) + \frac{q+1}{q-1}I(\tau, L_{Dp}(Nq)).
\end{equation}
Let $\varphi^{(2)}$ be defined as  $\varphi^{(1)}$ with  the roles of $p$ and $q$ switched. Then $\varphi^{(1)}$ and $\varphi^{(2)}$ form a matching pair by Proposition \ref{localmatch}. So Proposition \ref{globalmatch} implies
 $$
 I(\tau, \varphi^{(1)}) =I(\tau, \varphi^{(2)}),
 $$
that is
\begin{proposition}\label{pro3.5}
$$\frac{-2}{q-1} I(\tau, L_{Dp}(N)) + \frac{q+1}{q-1}I(\tau, L_{Dp}(Nq))
= \frac{-2}{p-1} I(\tau, L_{Dq}(N)) + \frac{p+1}{p-1}I(\tau, L_{Dq}(Np)).$$
\end{proposition}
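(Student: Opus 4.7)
The plan is to complete the construction sketched in the paragraph preceding the proposition: define the companion Schwartz function $\varphi^{(2)} \in S(V(Dq)(\A))$, verify that $(\varphi^{(1)},\varphi^{(2)})$ is a global matching pair in the sense of Proposition \ref{globalmatch}, and invoke Kudla's matching principle \eqref{eq:matching} to equate the two theta integrals.

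First I would define $\varphi^{(2)} = \prod_l \varphi_l^{(2)}$ by the same recipe used for $\varphi^{(1)}$, with the roles of $p$ and $q$ exchanged: $\varphi^{(2)}_l = \varphi_l^{ra}$ for $l\mid Dq$, $\varphi^{(2)}_l = \frac{-2}{l-1}\varphi_l^{sp} + \frac{l+1}{l-1}\varphi_{l,1}^{sp}$ when $l=p$, $\varphi^{(2)}_l = \cha(L^{sp}_l)$ when $l\nmid Dpq\infty$, and $\varphi_\infty^{(2)}$ the appropriate archimedean component ($\varphi_\infty^{ra}$ or $\varphi_\infty^{sp}$) according to whether $V(Dq)_\infty$ is non-split or split. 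Since $L_{Dq}(N)_l = L_{Dq}(Np)_l$ at every prime $l \neq p$, the same factorization that produced the formula for $I(\tau,\varphi^{(1)})$ gives
\begin{equation*}
I(\tau, \varphi^{(2)}) = \frac{-2}{p-1} I(\tau, L_{Dq}(N)) + \frac{p+1}{p-1}I(\tau, L_{Dq}(Np)).
\end{equation*}

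Next I would verify the global matching place by place. At $\infty$, Proposition \ref{prop3.3} supplies the match. At primes $l \nmid Dpq\infty$ and at primes $l\mid D$, the local quaternion algebras $B(Dp)_l$ and $B(Dq)_l$ coincide and the same Schwartz function is used on both sides. The only places at which one local quaternion is ramified while the other is split are $l=p$ and $l=q$, and at each of these Proposition \ref{localmatch}(1) provides exactly the matching pair that was selected. Proposition \ref{globalmatch} then gives $I(\tau,\varphi^{(1)}) = I(\tau,\varphi^{(2)})$, and equating this with the two explicit decompositions yields the stated identity.

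The only delicate point I anticipate is convergence of the theta integrals required by the Siegel--Weil formula. When $D$ has an even number of prime factors, both $V(Dp)$ and $V(Dq)$ are anisotropic and Theorem \ref{theo:Siegel-Weil} applies directly; when $D$ has an odd number of prime factors, both spaces are isotropic of Witt index $1$, so $\dim V - r = 2$ and the hypothesis of Theorem \ref{theo:Siegel-Weil} fails. In that case one must rely on a regularization or on the Heegner-divisor interpretation developed in the subsequent sections to make sense of $I(\tau,L)$. I expect this convergence/regularization issue to be the main obstacle to a fully rigorous argument, since the matching computation itself is essentially tautological given Propositions \ref{localmatch}, \ref{prop3.3}, and \ref{globalmatch}.
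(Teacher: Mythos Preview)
Your approach is exactly the paper's: define $\varphi^{(2)}$ by swapping $p$ and $q$, check matching place by place via Propositions~\ref{localmatch} and~\ref{prop3.3}, and invoke Proposition~\ref{globalmatch}.

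However, your convergence worry is unfounded, and acting on it would send you in the wrong direction. When $D$ has an odd number of prime factors, $Dp$ and $Dq$ each have an even number, so $V(Dp)$ and $V(Dq)$ have signature $(1,2)$ over $\R$; but this does \emph{not} mean they are isotropic over $\Q$. Since $Dp>1$ and $Dq>1$, the quaternion algebras $B(Dp)$ and $B(Dq)$ are division algebras over $\Q$, and on a division algebra the reduced norm vanishes only at $0$; hence $V(Dp)$ and $V(Dq)$ are anisotropic over $\Q$ (Witt index $0$), exactly as noted in Section~\ref{sect:Shimura}. Theorem~\ref{theo:Siegel-Weil} therefore applies directly in both cases, no regularization is needed, and this is precisely why Proposition~\ref{globalmatch} already assumes $D_1,D_2>1$. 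The Witt index in the hypothesis of Theorem~\ref{theo:Siegel-Weil} is the global one, not the archimedean one.
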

Taking  $m$-th Fourier coefficients, we could prove main results in this paper.
In next two sections, we will give arithmetic and geometric interpretations of the theta integrals in some special cases and prove theorems in the introduction.

\section{representations numbers and Heegner divisors}  \label{sect:definite and indefinite}
\subsection{Definite quadratic space and representations numbers}
In this subsection, show a general fact about positive definite quadratic forms for the convenience of the readers. We could see that the Fourier coefficients of theta integral associated to definite quadratic space are closely related to representation number over genus.

Let $(V, Q)$ be a positive definite quadratic space of dimension $m$. Define Gaussian
$$
\varphi_\infty(x) = e^{-2 \pi Q(x)}  \in S(V(\R)).
$$
Then we know
$$
\varphi_\infty(hx) =\varphi_\infty(x), \quad \omega(k_\theta^{'}) \varphi_\infty = e^{\frac{m}2 i \theta} \varphi_\infty
$$
for  $h \in O(V)(\R)$ and $k_\theta \in \SO_2(\R) \subset \SL_2(\R)$.

 For any $\varphi_f \in S(\hat V)$, where $\hat V = V \otimes_\Z \hat\Z$, define the theta kernel
$$
\theta(\tau, h, \varphi_f \varphi_\infty) = v^{-\frac{m}4} \theta(g_\tau^{\prime}, h, \varphi_f \varphi_\infty)
$$
is a holomorphic modular form of weight $\frac{m}2$ for some congruence subgroup. Here $g_\tau = n(u) m(\sqrt v)$ for $\tau =u + i v \in \mathbb H$  and $g_{\tau}^{\prime}=(g_{\tau}, 1)$, $n(u)$ and $m(\sqrt v)$ are introduced in Section 2. So
$$
I(\tau, \varphi_f\varphi_\infty) = v^{-\frac{m}4} I(g_\tau^{\prime}, \varphi_f\varphi_\infty)
$$
is also a modular form of weight $\frac{m}2$.

For an even integral lattice $L$ in $V$, we  let
\begin{equation} \label{eq:new4.1}
\theta(\tau, L) = \theta(\tau,  \cha(\hat L) \varphi_\infty), \quad I(\tau, L)=I(\tau,  \cha(\hat L) \varphi_\infty),
\end{equation}
where $\hat L = L \otimes_\Z \hat\Z$.
Notice that two lattices $L_1$ and $L_2$  in $V$ are in same class if there is $h \in O(V)(\Q)$ such that $hL_1 =L_2$. Two lattices $L_1$ and $L_2$  are in the same genus if they are equivalent locally everywhere, i.e,  there is $h \in O(V) (\hat\Q)$ such that $h L_1=  L_2$. The group $O(V)(\A)$ acts on the set of lattices as follows: $h L =  (h_f \hat L)\cap V$ where $h_f$ is the finite part of $h=h_f h_\infty$.

 Let $\gen(L)$ be the genus of $L$ (the set of all lattices in the same genus of $L$). Then from the above discussion, there is a bijective map
$$
O(V)(\Q) \backslash O(V)(\A)/K(L) O(V)(\R) \cong  \gen(L), \quad [h] \mapsto hL,
$$
where $K(L)$ is the stabilizer subgroup of $\hat L$ in $O(V)(\hat\Q)$.

\begin{proposition} \cite{DuYang}\label{pro4.1}Let
$$
r_L(n) =|\{ x \in  L:\,  Q(x) =n\}|, \quad  r_{\gen(L)}(n) = \left(\sum_{L' \in \gen(L)} \frac{1}{|O(L')|}\right)^{-1} \sum_{L' \in \gen(L)}  \frac{r_{L'}(n)}{|O(L')|},
$$
where $O(L)$ is the stabilizer of $L$ in $O(V)$. Then, for $q =e(\tau)$,

 \begin{equation}\nonumber \begin{split}\theta(\tau, h,  L) =\sum_{n=0}^\infty r_{hL}(n) q^n,\\
I(\tau, L) = \sum_{m=0}^\infty r_{\gen(L)}(n) q^n.\end{split} \end{equation}

In particular, the modular form $I(\tau, L)$ is  a genus theta function.
\end{proposition}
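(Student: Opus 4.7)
The plan is to prove the two identities in order, starting with the pointwise Fourier expansion of the theta kernel and then averaging over $[O(V)]$.

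For the first identity, I would simply unwind the definition $\theta(g',h,\varphi)=\sum_{x\in V(\Q)}(\omega(g')\varphi)(h^{-1}x)$ at the point $g_\tau'=(n(u),1)(m(\sqrt v),1)$, embedded archimedeanly. At the finite places $g_\tau'$ is trivial, so $\varphi_f=\cha(\hat L)$ is unchanged, and the condition $\cha(\hat L)(h_f^{-1}x)=1$ restricts the sum to $x\in (h_f\hat L)\cap V=hL$. At the archimedean place, the Weil representation formulas (\ref{weilrep}) give $\omega(n(u))\varphi_\infty(x)=e^{2\pi i u Q(x)}\varphi_\infty(x)$ and $\omega(m(\sqrt v))\varphi_\infty(x)=\chi^\psi(m(\sqrt v))\,v^{m/4}\varphi_\infty(\sqrt v x)$; since $V$ is positive definite and $\sqrt v>0$, both $\chi(\sqrt v)$ and the Weil index $\gamma(\sqrt v,\psi_\infty)$ are $1$, so the prefactor collapses to $v^{m/4}$ and the Gaussian combines with the $e^{2\pi i u Q(x)}$ to give $v^{m/4}q^{Q(x)}$. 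Because $h_\infty\in O(V)(\R)$ preserves $Q$, nothing further happens at infinity, and dividing by $v^{m/4}$ as prescribed in the definition of $\theta(\tau,h,L)$ yields $\sum_{x\in hL}q^{Q(x)}=\sum_{n\geq 0}r_{hL}(n)q^n$.

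For the second identity, the key observation is that $\theta(\tau,h,L)$ depends only on the image of $h$ under the bijection
\[
O(V)(\Q)\backslash O(V)(\A)/K(L)O(V)(\R)\;\cong\;\gen(L),\qquad [h]\mapsto hL,
\]
since $hL$ is unaffected by $h\mapsto \gamma h k$ with $\gamma\in O(V)(\Q)$ and $k\in K(L)O(V)(\R)$, and $r_{\gamma hL}(n)=r_{hL}(n)$. Decomposing $O(V)(\A)=\bigsqcup_j O(V)(\Q)h_jK(L)O(V)(\R)$ with $\{h_jL\}$ ranging over a set of genus representatives, the integral splits as
\[
\int_{[O(V)]}\theta(\tau,h,L)\,dh=\sum_j\theta(\tau,h_j,L)\cdot\vol\bigl(O(V)(\Q)\backslash O(V)(\Q)h_jK(L)O(V)(\R)\bigr).
\]
Each double coset volume equals $\vol(K(L)O(V)(\R))/|O(h_jL)|$, because the stabilizer in $O(V)(\Q)$ of $h_jK(L)O(V)(\R)$ under left multiplication is exactly $O(h_jL)$ (the subgroup of $\gamma$ with $h_j^{-1}\gamma h_j\in K(L)O(V)(\R)$, equivalently $\gamma\cdot h_j\hat L=h_j\hat L$). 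Writing $\vol([O(V)])$ as the sum of these volumes, the factor $\vol(K(L)O(V)(\R))$ cancels and one obtains
\[
I(\tau,L)=\frac{\sum_j\theta(\tau,h_j,L)/|O(h_jL)|}{\sum_j 1/|O(h_jL)|}.
\]
Substituting the expansion from the first part and recognizing the right-hand side as $\sum_n r_{\gen(L)}(n)q^n$ completes the proof.

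I expect the only genuine obstacle to be the archimedean Weil-representation bookkeeping in odd dimension (i.e., carefully checking that $\chi^\psi(m(\sqrt v))=1$ so the Gaussian evaluation produces no extraneous eighth root of unity); the double-coset computation for the genus average is standard and the volume cancellation between numerator and denominator is automatic, so no absolute normalization of Tamagawa measure needs to be tracked.
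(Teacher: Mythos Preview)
The paper does not actually prove this proposition; it is stated with the citation \cite{DuYang} and no argument is given. Your proof is correct and is precisely the standard computation one would expect in that reference: the first identity follows by unwinding the Weil representation at $g_\tau'$ (your check that $\chi^\psi(m(\sqrt v),1)=1$ at the real place is the only nontrivial point, and it is right because $\sqrt v>0$ is a square in $\R^\times$), and the second identity is the usual decomposition of $[O(V)]$ into double cosets indexed by $\gen(L)$, with the stabilizer calculation giving the weights $1/|O(h_jL)|$. There is nothing to add.
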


\subsection{Shimura curve and Heegner divisors} \label{sect:Shimura}

In this subsection, we assume that $D>0$ has even number prime factors, and then $V= (V(D), \det)$ is of signature $(1, 2)$ and is anisotropic when $D>1$. According to \cite[Theorem 4.23]{KuIntegral},  the theta integral $I(g, \varphi)$ is a generating function of degrees of  some divisors with respect to the tautological line bundle over the Shimura curve associated to $V$. In this paper, these divisors are Heegner divisors in Shimura curves.

 Let  $H=\Gspin(V) $. It is know that there is an isomorphism $H\cong B^{\times}.$ The action acts on $V$ is explicit, $$g.v=gvg^{-1}, \quad g\in B^{\times} ,\quad v \in V.$$

There is
an  exact sequence
$$
1 \rightarrow \mathbb G_m \rightarrow H \rightarrow \SO(V) \rightarrow 1.
$$
Let  $\mathbb D$ be the Hermitian domain of oriented negative $2$-planes in $V(\R)$, and
$$
\mathcal L = \{ w \in V_\C=V(\C):\,  (w, w) =0,  (w, \bar w) <0\}.
$$
$H(\R)$ acts naturally on both.
The  map
$$
f: \mathcal L/\C^\times \cong  \mathbb D,  \quad w= u+ i v \mapsto  \R (-u) + \R v
$$
gives an $H(\R)$-equivariant isomorphism between $\mathcal L/\C^\times$ and $\mathbb D$. So $\mathcal L$ is a  (tautological) line bundle over $\mathbb D$. The Hermitian domain also has a tube representation which we need.  Indeed, we know
$$
\mathbb D=\H \cup \overline{\H}.
$$
Thus under this identification, the action of $\H(\R)$ on $\mathbb D$ becomes the usual linear fractional action.

Associated to  a compact open subgroup $K$ of $H(\hat \Q)$ is a Shimura curve $X_K$ over $\Q$ such that
$$
X_K(\C) = H(\Q) \backslash \mathbb D \times H(\hat\Q)/K.
$$
Moreover, $\mathcal L$ descends to a line bundle on $X_K$, which we continue to denote by $\mathcal L$. It can be identified with the line bundle of two variable modular forms of weight $(1, 1)$.   In this section, we always assume
$$
K = \hat{\OO}_D(N)^\times \subset H(\hat\Q)
$$
which preserves the lattice $L_D(N)$, where $\OO_D(N)$ is the Eichler order of conductor N. By the Strong Approximation theorem $H(\hat \Q)=H(\Q)K, $ one has $$X_K= X_0^D(N).$$
\begin{comment} We also denote
$$
\Gamma = K \cap H(\Q) =\{ (\gamma_1, \gamma_2) \in \OO_D(N)^\times \times \OO_D(N)^\times:\,  \det \gamma_1 =\gamma_2 =\pm 1\}
$$
\end{comment}

%We remark that $X_0^D(N)^*$ is isomorphic to $X_0^D(N)$ via $z \mapsto w(z)$.
\begin{comment}\begin{proof}Let
$$
H_1=\{ (g_1, g_2) \in H:\,  \det g_1 =\det g_2 =1\} =\ker \mu =\Spin(V),   \quad K_1 =H_1(\hat\Q) \cap K.
$$
By the strong approximation theorem, one has $$
$$
H_1(\A) = H_1(\Q) K_1 H_1(\R).
$$
Since $\mu(H(\Q) K H(\R)^+)=\A^\times$, then one has
$$
H(\A) =H(\Q) K H(\R)^+.
$$
So
\begin{align*}
X_K &= H(\Q) \backslash H(\A)/(K K_\infty)
\\
&= H(\Q) \backslash (H(\Q) K H(\R)^+)/(K K_\infty)
\\
 &=( H(\Q)\cap (K H(\R)^+ )) \backslash H(\R)^+/K_\infty.
\end{align*}
Here $K_\infty$ is stabilizer of  $(i, i) \in H^2$ in $H(\R)$  and also  in $H(\R)^+$. Notice that
$$
H(\Q)\cap (K H(\R)^+ ) = H_1(\Q) \cap K_1 =\Gamma_0^D(N) \times \Gamma_0^D(N).
$$
So
$$
X_K = X_0^D(N) \times X_0^D(N)^*,
$$
where $X_0^D(N)^* =\Gamma_0^D(N) \backslash \H$ with a slightly different action $\gamma*z= \gamma^*(z)$. Now the lemma follows from the isomorphism
$$
X_0^D(N)  \cong X_0^D(N)^*,  \quad [z] \mapsto [wz].
$$

\end{proof}

Let
$$
\Omega = -\frac{1}{4 \pi} \left( y_1^{-2}  dx_1\wedge dy_1 +  y_2^{-2}  dx_2\wedge dy_2\right)
$$
be as in \cite[Example 4.13]{KuIntegral}. It corresponds to Chern class $-c_1(\mathcal L)$ in $H^2(X_K)$.

\end{comment}

 Let $\Omega_0 = \frac{1}{2 \pi} y^{-2} dx\wedge dy$ be the differential form on $X_0^D(N)$, then
from \cite[(2.7)]{KRYComp} and
\cite[Lemma 5.3.2]{Miy} we know
\begin{align} \label{eq:volume}
\vol(X_0^D(N), \Omega) &:= \int_{X_0^D(N)} \Omega_0 = -2 [\OO_D^1 : \Gamma_0^D(N)] \zeta_D(-1)
\\
   &=\frac{ DN}6 \prod_{p|N} (1+p^{-1}) \prod_{p|D} (1-p^{-1} )  \in \frac{1}6 \Z  ,  \notag
\end{align}
where $\zeta_D(s) =\prod_{p\nmid D} (1-p^{-s})^{-1}$ is the partial zeta function, and $\OO_D$ is a maximal order of $B$ containing $\OO_D(N)$.

Recall the Kudla cycle on $X_K$.
Fix a $x \in V(\Q)$ with $\det (x) >0$ and $h \in H(\hat \Q)$, $x^\perp$
is a subspace of signature $(0, 2)$ and defines a sub-Shimura variety $Z(x)$ of $X_{h K h^{-1}}$, its right translate by $h$ gives a divisor $Z(x, h)$ in $X_K$.
Let $\varphi_f \in S(\hat V)^K$ and $m \in \Q_{>0}$. If there is a $x_0 \in V(\Q)$ such that $\det (x_0) =m$, we   define the associated Kudla cycle  $Z(m, \varphi_f)$  as
$$
Z(m, \varphi_f)= \sum_{j=1}^r  \varphi_f(h_j^{-1}x_0) Z(x_0, h_j),
$$
where
$$
\hbox{Supp}(\varphi_f) \cap \{ x \in V(\hat\Q):\,  \det x =m \} = \coprod_{j=1}^r  K h_j^{-1} x_0.
$$
Otherwise, we define $Z(m, \varphi_f) =0$.

 Let $L_m =\{ x \in L:\,  \det x =m\}$.
By the Strong Approximation theorem, one has
$H(\hat\Q) =H(\Q)K$.  So we have the decomposition with $Q(x_0) =m$,
$$
\hat L_m= \coprod K h_j^{-1} x_0,
$$
where $h_j \in  H(\Q)$. Then we know
$$
L_m = \coprod \Gamma_K h_j^{-1} x_0 = \coprod \Gamma_K x_j,  \quad x_j =h_j^{-1} x_0 \in L,
$$
where $\Gamma_K=K \cap H(\Q)$,
and
$$
Z(m, \varphi_f) = \sum_j Z(x, h_j) =\sum_j  Z(h_j^{-1} x_0) =\sum_j Z(x_j).
$$
where $Z(x_j)$ are Heegner points in this paper. Define $$\deg Z(m, \varphi_f) = \sum_{x\in  \Gamma_K  \setminus  L_{m}} \frac{1}  { \mid \Gamma_x \mid},$$ and $$Z_{D, N}(m):=Z(m, \cha (\widehat{L_D(N)}),$$ where $\Gamma_x$  is the stablizer subgroup of $x$  in $\Gamma_K$. Let $$
r_{D, N}(m) = \frac{\deg Z_{D, N}(m)}{ \vol(X_0^D(N), \Omega)}
$$
be as in the introduction.

\section{main results}\label{result}
In Section \ref{sect:definite and indefinite}, we introduced the number $r_{D, N}$ for both the definite and indefinite space $V(D)$. This number is related to the coefficient of theta integral for both spaces as follows:

\begin{proposition}\label{pro 5.1}
Let $D>1$, for quadratic space $V(D)$, let $\varphi_f =\cha (\widehat{L_D(N)})$, one has
$$
I(\tau, \varphi_f \varphi_\infty^{sp} ) = v^{-\frac{3}{4}} I(g_\tau^{\prime}, \varphi_f\varphi_\infty^{sp}) = \sum_{m=0}^\infty r_{D, N}(m) q^m,
$$
where  $r_{D, N}(0) =1$, and for $m >0$.
\end{proposition}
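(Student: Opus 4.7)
The statement combines two regimes (definite and indefinite) that require separate treatment; the definite case is essentially Proposition \ref{pro4.1}, so the new content lies in the indefinite signature $(1,2)$ case, where $D$ has an even number of prime factors and $\varphi_\infty^{sp}$ is Kudla's weight $3/2$ Schwartz function from Proposition \ref{prop3.3}. The plan is to directly compute the Fourier expansion of $I(g_\tau', \varphi_f \varphi_\infty^{sp})$ and match coefficients.

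Since $D > 1$, the space $V(D)$ is anisotropic, so Siegel--Weil (Theorem \ref{theo:Siegel-Weil}) applies and we may write
$$I(g_\tau', \varphi) = \frac{1}{2\vol([O(V)])} \int_{[SO(V)]} \theta(g_\tau', h, \varphi_f \varphi_\infty^{sp})\, dh.$$
Substituting $g_\tau = n(u) m(\sqrt{v})$ into the Weil representation formulas \eqref{weilrep} yields $\omega(g_\tau') \varphi(x) = v^{3/4} e^{2\pi i u Q(x)} \varphi(\sqrt{v}\, x)$. After the normalization $v^{-3/4}$, the Fourier expansion in $\tau$ becomes a sum over $m \in Q(V(\Q))_{\geq 0}$ of terms $a(m,v) q^m$, where the coefficient is an orbital integral over $\{x \in V(\Q) : Q(x) = m\}$.

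For $m > 0$, unfolding the theta series against the $[SO(V)]$-integral and invoking Strong Approximation $H(\hat\Q) = H(\Q)K$ (as in Section \ref{sect:Shimura}) rewrites the adelic integral as a sum over $\Gamma_K$-orbits on $L_D(N)_m$:
$$a(m,v) = \frac{1}{\vol(X_0^D(N), \Omega)} \sum_{x \in \Gamma_K \backslash L_D(N)_m} \frac{1}{|\Gamma_x|} \, I_\infty(x, v),$$
where $I_\infty(x,v)$ is an archimedean integral of $\varphi_\infty^{sp}(h^{-1} \sqrt{v}\, x, V^-)$ over $\Gamma_x \backslash H(\R)/K_\infty$. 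For $Q(x) > 0$, the orthogonal decomposition $V(\R) = \R x \oplus x^\perp$ plays the role of \eqref{eq:spacedecomposition} with $V^- = x^\perp$. The explicit form of $\varphi_\infty^{sp}$ in Proposition \ref{prop3.3} shows that $x^+$ is essentially fixed by the stabilizer, and Kudla's calculation in \cite[Section 4, Theorem 4.23]{KuIntegral} evaluates this integral to $1$, producing $a(m,v) = \deg Z_{D,N}(m) / \vol(X_0^D(N), \Omega) = r_{D,N}(m)$.

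For the constant term, only $x = 0$ contributes, and the value $\varphi_\infty^{sp}(0, V^-) = -1$ combined with the Tamagawa normalization of $\vol([O(V)])$, the Siegel--Weil constant $\kappa = 1$, and the comparison between the Tamagawa and classical volumes \eqref{eq:volume} gives $r_{D,N}(0) = 1$. The main obstacle is the careful bookkeeping of normalization constants --- the Tamagawa measure on $[SO(V)]$, the Weil indices from Lemma \ref{weilindex}, and the interplay with the classical volume \eqref{eq:volume} --- so that the archimedean integral comes out to exactly $1$ and the matching with $\deg Z_{D,N}(m) / \vol(X_0^D(N), \Omega)$ is exact rather than merely proportional.
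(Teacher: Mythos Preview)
Your proposal is correct and follows essentially the same route as the paper: the definite case is Proposition~\ref{pro4.1}, and the indefinite case is the unfolding computation carried out in \cite[Section~4.8, Theorem~4.23]{KuIntegral} and \cite[Theorem~3.4]{Fu}. The paper's proof simply cites these two references for the identification $c(m)=\deg Z_{D,N}(m)/\vol(X_K,\Omega)$ and $c(0)=1$, whereas you have sketched the content of those references (the Strong Approximation reduction to $\Gamma_K$-orbits and the evaluation of the archimedean integral); your caveat about the normalization bookkeeping is exactly why the paper defers to Funke for the precise constants rather than redoing them.
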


\begin{proof} When $V(D)$ is definite, this is  Propostion \ref{pro4.1}. When $V(D)$ is indefinite, write
$$
I(\tau, \varphi_f \varphi_\infty^{sp} )  = \sum_{m=0}^\infty c(m) q^m.
$$
By \cite[Section 4.8]{KuIntegral} and \cite[Theorem 3.4]{Fu}, one has $c(0) =1$ and for $m >0$,
$$c(m)=\frac{\deg Z_{D, N}(m)}{\vol(X_K, \Omega)}.$$ So $c(m) = r_{D, N}(m)$ as claimed.
\end{proof}
{\bf Proof of Theorem \ref{theo1.1} and Corollary \ref{coro1.2} } : From Proposition \ref{pro3.5} and the Proposition \ref{pro 5.1} one proves Theorem \ref{theo1.1}. From this theorem, the definition of normalized degree of Heegner divisors(\ref{hdeg}) and the volume formula (\ref{eq:volume}) one obtain the corollary easily.\\

{\bf Proof of Theorem \ref{theo1.3} }:  Let $V^{(1)} =V(D)$ and $V^{(2)} =V(Dp)$ as above, and let $\varphi^{(i)} =\prod_l \varphi_l^{(i)} \in S(V^{(i)}(\A)$ be as follows. For $l \nmid p\infty$, we identify $L_D(N)_l$ with $L_{Dp}(N)_l$ and denote $\varphi_l^{(i)} = \cha(L_D(N)_l)$. If $D$ has odd number of primes, let
$$
\varphi_\infty^{(1)} =\varphi_\infty^{ra},  \quad \varphi_\infty^{(2)} =\varphi_\infty^{sp}.
$$
Otherwise, $D$ has even number of primes, let $$
\varphi_\infty^{(1)} =\varphi_\infty^{sp},  \quad \varphi_\infty^{(2)} =\varphi_\infty^{ra}.
$$
Finally, let
$$
\varphi_p^{(1)} = -\frac{2}{p-1} \varphi_{p}^{sp} + \frac{p+1}{p-1}\varphi_{p, 1}^{sp}, \quad \varphi_p^{(2)} = \varphi_{p}^{ra}.
$$
Then  $\varphi^{(1)}$ and $\varphi^{(2)}$ match by the results in Section \ref{sect:matching}. So one has  by Proposition  \ref{globalmatch}
$$
I(\tau, \varphi^{(1)}) =I(\tau, \varphi^{(2)}).
$$
Comparing $m$-th coefficients of the both sides, and from Proposition \ref{pro 5.1} and Proposition \ref{pro4.1} one proves Theorem \ref{theo1.3}.\\

\section*{Acknowledgements}
This paper was inspired by Kudla's matching principle, thanks him for his influence. I also thank Tonghai Yang for his good suggestion and useful discussion. The author is grateful to the Mathematical Science Center of Tsinghua University, for providing him a good opportunity to visit and a good research environment in the summer 2013. The author is partially supported by NSFC (Nos. 11171141),
 NSFJ (Nos. BK2010007), PAPD and the Cultivation Fund of the Key
Scientific and Technical Innovation Project, Ministry of Education
of China (No.708044), NSFC (11326052).

\end{document}